\tikzstyle{block} = [draw, fill=white, rectangle,
\tikzstyle{sum} = [draw, fill=white, circle, node distance=1cm]
\tikzstyle{input} = [coordinate]
\tikzstyle{output} = [coordinate]
\tikzstyle{pinstyle} = [pin edge={to-,thin,black}]
\tikzset{roads/.style={line width=0.2cm}}
\begin{document}

\title{Adaptive system optimization using \\random directions stochastic approximation}

\author[$\dag$]{Prashanth L.A.\thanks{prashla@isr.umd.edu}}
\author[$\sharp$]{Shalabh Bhatnagar\thanks{shalabh@csa.iisc.ernet.in}}
\author[$\$$]{Michael Fu\thanks{mfu@isr.umd.edu}}
\author[$\ddag$]{Steve Marcus\thanks{marcus@umd.edu}}
\affil[$^\dag$]{\small Institute for Systems Research, University of Maryland}
\affil[$^\sharp$]{\small Department of Computer Science and Automation,
Indian Institute of Science, Bangalore}
\affil[$^\$$]{\small Robert H. Smith School of Business \& Institute for Systems Research,
University of Maryland}
\affil[$^{\ddag}$]{\small Department of Electrical and Computer Engineering \& Institute for Systems Research,
University of Maryland}

\renewcommand\Authands{ and }

\date{}
\maketitle

\begin{abstract}
We present new algorithms for \textit{simulation optimization} using random directions stochastic approximation (RDSA). 
These include first-order (gradient) as well as second-order (Newton) schemes.
We incorporate both continuous-valued as well as discrete-valued perturbations into both our algorithms. The former are chosen to be independent and identically distributed (i.i.d.) symmetric uniformly distributed random variables (r.v.), while the latter are i.i.d., asymmetric Bernoulli r.v.s.
Our Newton algorithm, with a novel Hessian estimation scheme, requires $N$-dimensional perturbations and three loss measurements per iteration, whereas the simultaneous perturbation Newton search algorithm of \cite{spall_adaptive} requires $2N$-dimensional perturbations and four loss measurements per iteration. 
We prove the unbiasedness of both gradient and Hessian estimates and asymptotic (strong) convergence for both first-order and second-order schemes. We also provide asymptotic normality results, which in particular establish that 
the asymmetric Bernoulli variant of Newton RDSA method is better than 2SPSA of \cite{spall_adaptive}. 
Numerical experiments are used to validate the theoretical results.
\end{abstract}


\section{Introduction}
\label{sec:introduction}

Problems of optimization under uncertainty arise in many areas of
engineering and science, such as signal processing, operations research,
computer networks, and manufacturing systems.
The problems themselves may involve system identification,
model fitting, optimal control, or performance evaluation based
on observed data. 
We consider the
following optimization problem in $N$ dimensions:
\begin{align}
\mbox{Find } x^* = \arg\min_{x \in \R^N} f(x). \label{eq:pb}
\end{align}
We operate in a \textit{simulation optimization} setting \cite{fu2015handbook}, i.e., we are given only noise-corrupted measurements of  $f(\cdot)$ and the goal is to devise an iterative scheme that is robust to noise.  We assume that, at any time instant $n$, the conditional expectation of the noise, say $\xi_n$, given all the randomness up to $n$ is zero (see assumption (A2) in Section \ref{sec:1rdsa-results} for the precise requirement).

A general stochastic approximation algorithm \cite{rm} 
finds the zeros of a given objective function when only noisy
estimates are available. Such a scheme can also be applied
to gradient search provided one has access to estimates of the objective function gradient.
Schemes based on sample gradients include perturbation analysis (PA) \cite{hocao}
and likelihood ratio (LR) \cite{lecuyerglynn} methods, which typically require only 
one simulation run per gradient estimate, but are not universally applicable. 

Algorithms based on gradient search perform the following iterative update:
\begin{align}
\label{eq:gd}
x_{n+1} = x_n - a_n \widehat\nabla f(x_n),
\end{align}
where $a_n$ are \textit{step-sizes} that are set in advance and satisfy standard stochastic approximation conditions (see (A2) in Section \ref{sec:1rdsa}) and $\widehat\nabla f(\cdot)$ is an estimate of the gradient $\nabla f(\cdot)$ of the objective function $f$.

The finite difference Kiefer-Wolfowitz (KW) \cite{kw} estimates require $2N$ system simulations for a gradient estimate $\widehat\nabla f$. This makes the scheme, also called finite difference
stochastic approximation (FDSA), disadvantageous for large parameter
dimensions. The random directions stochastic approximation (RDSA)
approach \cite[pp.~58-60]{kushcla} alleviates this problem by requiring two system
simulations regardless of the parameter dimension. It does this by randomly perturbing
all the $N$ parameter component directions using independent random vectors that
are uniformly distributed over the surface of the $N$-dimensional unit sphere. 
It has been observed
in \cite{chin1997comparative}, see also \cite{rubinstein}, \cite{bhat2}, \cite{bhatnagar-book}, that RDSA also works 
in the case when the component perturbations are independent Gaussian or Cauchy distributed.

RDSA with Gaussian perturbations has also been independently derived in \cite{katkul}
by approximating the gradient of the expected performance
by its convolution with a multivariate Gaussian that is then seen (via an
integration-by-parts argument) as a convolution of the objective function with a 
scaled Gaussian. This procedure requires only one simulation (regardless of
the parameter dimension) with a perturbed parameter (vector) whose component
directions are perturbed using independent standard Gaussian random variables. 
A two-simulation finite difference version that has lower bias than the
aforementioned RDSA scheme is studied in \cite{stybtang}, \cite{chin1997comparative}, \cite{bhat2}.

Among all gradient-based random perturbation approaches involving sample measurements of the objective function, the simultaneous perturbation stochastic
approximation (SPSA) of \cite{spall} has been the most popular and widely studied in applications,
largely due to its ease of implementation and observed numerical performance
when compared with other approaches.
Here each component direction of the parameter is perturbed using independent, zero mean,
symmetric Bernoulli distributed random variables. In \cite{chin1997comparative}, performance
comparisons between RDSA, SPSA, and FDSA have been studied through both analytical
and numerical comparisons of the mean square error metric, and it is observed that
SPSA outperforms both RDSA with Gaussian perturbations and FDSA.

Within the class of simulation-based search methods, there are also methods that estimate 
the Hessian in addition to the gradient. Such methods perform the following update:
\begin{align}
\label{eq:newton}
x_{n+1} = x_n - a_n (\overline H_n)^{-1}\widehat\nabla f(x_n), 
\end{align}
where $\widehat\nabla f(x_n)$ is an estimate of the gradient $\nabla f(\cdot)$ as before, while $\overline H_n$ is an $N\times N$-dimensional matrix estimating  the true Hessian $\nabla^2 f(x^*)$. Thus, \eqref{eq:newton} can be seen to be the stochastic version of the well-known Newton method for optimization. 

Stochastic Newton methods are often more accurate than simple gradient search schemes,
which are sensitive to the choice of the constant $a_0$ in the canonical step-size, $a_n= a_0/n$. The optimal (asymptotic) convergence rate is obtained only if $a_0 > 1/3 \lambda_0$, where $\lambda_0$ is the minimum eigenvalue of the Hessian of the objective function (see \cite{fabian1967stochastic}). However, this dependency is problematic, as $\lambda_0$ is unknown in a \textit{simulation optimization} setting. Hessian-based methods get rid of this dependency, while attaining the optimal rate (one can set $a_0=1$). An alternative approach to achieve the same effect is to employ Polyak-Ruppert averaging, which uses larger step-sizes and averages the iterates. However, iterate averaging is optimal only in an asymptotic sense. Finite-sample analysis (see Theorem 2.4 in \cite{fathi2013transport}) shows that the initial error (that depends on the starting point $x_0$ of the algorithm) is not forgotten sub-exponentially fast, but at the rate $1/n$ where $n$ is the number of iterations. 
Thus, 
the effect of averaging kicks in only after enough iterations have passed and the bulk of the iterates are centered around the optimum. 

In \cite{fabian}, the Hessian is estimated using $O(N^2)$ 
samples of the cost objective at each iterate, while in \cite{ruppert} the Hessian
is estimated assuming knowledge of objective function gradients. 
During the course of the last fifteen years, there has been considerable research activity
aimed at developing adaptive Newton-based random search algorithms for stochastic optimization. 
In \cite{spall_adaptive}, the first adaptive Newton algorithm using the simultaneous perturbation method was proposed. The latter algorithm involves the generation of
$2N$ independent symmetric Bernoulli distributed random variables at each update
epoch. The Hessian
estimator in this algorithm requires four parallel simulations with different perturbed
parameters at each update epoch. Two of these simulations are also used for gradient
estimation. The Hessian estimator is projected to the space of positive definite and symmetric matrices 
at each iterate for the algorithm to progress along a descent direction.
In \cite{bhat1}, three other simultaneous perturbation estimators of the Hessian
that require three, two, and one simulation(s)  have been proposed
in the context of long-run average cost objectives. The resulting algorithms incorporate
two-timescale stochastic approximation, see Chapter 6 of \cite{borkar}.
Certain three-simulation balanced simultaneous perturbation Hessian estimates have been
proposed in \cite{sbpla}. In addition, certain Hessian inversion procedures that require
lower computational effort have also been proposed, see also \cite{bhatnagar-book}.
In \cite{zhuspall}, a similar algorithm as in \cite{spall_adaptive} is considered
except that for computational simplicity, the
geometric mean of the eigenvalues (projected to the positive half line)
is used in place of the Hessian inverse
in the parameter update step. In \cite{spall-jacobian}, certain enhancements to the 
four-simulation Hessian estimates of
\cite{spall_adaptive} using some feedback and weighting mechanisms
have been proposed. 
In \cite{bhat2}, Newton-based smoothed functional algorithms based on Gaussian perturbations
have been proposed. An overview of random search approaches (both gradient and Newton-based) involving both theory and application of these techniques is available in \cite{bhatnagar-book}.

A related body of work in the machine learning community is bandit optimization \cite{hazan2015online}. Gradient estimates using the principle of first-order RDSA schemes have been used in \cite{flaxman2005online}. In \cite{nesterov2011random}, the authors explore smoothing using Gaussian perturbations for stochastic convex optimization problems and establish optimal convergence rates for this setting. In \cite{duchi2013optimal}, the authors establish optimal (non-asymptotic) convergence rates for first-order RDSA gradient estimate coupled with a mirror descent scheme. 

The principal aim of this paper is to develop an RDSA-based second-order method. 
A related objective is to achieve convergence guarantees similar to 2SPSA in \cite{spall_adaptive}, preferably at a lower per-iteration cost. 
A key ingredient in any RDSA scheme is the choice of random perturbations. We propose two schemes in this regard.
The first scheme employs i.i.d. uniform $[-\eta,\eta]$ (for some $\eta >0$) perturbations, while the second scheme employs i.i.d.  asymmetric Bernoulli perturbations. The latter takes values $-1$ and $1+\epsilon$ (for some small $\epsilon >0$) with probabilities $\left(\frac{1+\epsilon}{2+\epsilon}\right)$ and $\left(\frac{1}{2+\epsilon}\right)$, respectively.

As evident from the update rule \eqref{eq:newton}, the performance of any second-order method is affected by the choices of both the gradient estimation scheme and the Hessian estimation scheme. This motivates our study of first-order RDSA schemes with the aforementioned two perturbation schemes prior to analysing the second-order RDSA algorithms.
Table \ref{tab:algos} presents a classification of our algorithms based on their order and the perturbations employed.  
\begin{table}
\caption{A taxonomy of proposed algorithms.}
\label{tab:algos}
\centering
\begin{tabular}{c|c|c}
\textbf{Order }$\bm{\rightarrow}$ & \multirow{2}{*}{First-order} & \multirow{2}{*}{Second-order} \\ 
\textbf{Perturbations} &&\\
$\bm{\downarrow}$ &&\\\hline
\multirow{2}{*}{Uniform $\bm{[-\eta,\eta]}$} & \multirow{2}{*}{1RDSA-Unif} & \multirow{2}{*}{2RDSA-Unif} \\ 
 &&\\\hline
\multirow{2}{*}{Asymmetric Bernoulli $\bm{\{-1,1+\epsilon\}}$} & \multirow{2}{*}{1RDSA-AsymBer} & \multirow{2}{*}{2RDSA-AsymBer} \\ 
 &&\\\hline
 &&\\
\end{tabular}
\end{table}

We summarize our contributions below. 
\paragraph{\textbf{Stochastic Newton method}}
We propose an adaptive Newton-based RDSA algorithm. 
The benefits of using our procedure are two-fold. 
First,  the algorithm requires generating  only $N$ perturbation
variates at each iteration even for the Newton scheme ($N$ being the parameter dimension), 
unlike the simultaneous perturbation Newton algorithms of 
\cite{spall_adaptive}, \cite{bhat1}, \cite{spall-jacobian}, \cite{sbpla}, \cite{zhuspall}, which require generation of $2N$ perturbation
variates. Second, the number of system simulations required per iteration in our procedure is three, whereas the procedure in \cite{spall_adaptive} requires four.

\paragraph{\textbf{Stochastic gradient method}}
We also propose a gradient RDSA scheme 
that can be used with either uniform or asymmetric Bernoulli perturbations. Our gradient estimates require two parallel simulations
with (randomly) perturbed parameters.

\paragraph{\textbf{Convergence and rates}}
We prove unbiasedness of our gradient and Hessian estimators and prove 
almost sure convergence of our algorithms to  local minima of the objective function. 
We also present asymptotic normality results that help us analyse our algorithms from an asymptotic mean square error (AMSE) viewpoint.
From this analysis, we observe that\\ 
\begin{inparaenum}[\bfseries(i)] 
\item On \textit{all} problem instances, the asymmetric Bernoulli variant of 2RDSA results in an AMSE that is better than 2SPSA, for the same number of iterations. 
It is computationally advantageous to employ our adaptive RDSA scheme as it requires three simulations per iteration (2SPSA requires four per iteration).\\
\item On many problem instances, the adaptive RDSA scheme with uniform perturbations that we propose results in an AMSE that is better than the Newton algorithm
(2SPSA) of \cite{spall_adaptive}. In particular, the question of whether 2SPSA or our adaptive RDSA  scheme is better depends on the values of problem-dependent (equivalently, algorithm-independent) quantities (see (A) and (B) in Section \ref{sec:amse}), and there are many problem instances where 2RDSA with uniform perturbations is indeed better than 2SPSA.\\
\item  The gradient algorithm 1RDSA with asymmetric Bernoulli perturbations exhibits an AMSE that is nearly comparable to that of SPSA, while 1RDSA variants with Gaussian \cite{kushcla,chin1997comparative} and uniform perturbations perform worse.
\end{inparaenum}

\paragraph{\textbf{Experiments}} 
Numerical results using two objective functions - one quadratic and the other fourth-order - show that \\
\begin{inparaenum}[\bfseries (i)]
\item asymmetric Bernoulli variant of 1RDSA performs on par with 1SPSA of \cite{spall}; and\\
\item our Newton algorithm 2RDSA-AsymBer provides better accuracy levels than the Newton algorithm 2SPSA in \cite{spall_adaptive} for the same number of iterations, despite 2RDSA requiring only 75\% of the cost per-iteration as compared to 2SPSA.
\end{inparaenum}

The rest of the paper is organized as follows: In Section~\ref{sec:1rdsa}, we
describe the first-order RDSA algorithm with the accompanying asymptotic theory. In Section \ref{sec:2rdsa}, we present the second-order RDSA algorithm along with proofs of convergence and asymptotic rate results.
We present the results from numerical experiments in Section~\ref{sec:experiments} and provide concluding remarks in Section~\ref{sec:conclusions}.

\section{First-order random directions SA (1RDSA)}
\label{sec:1rdsa}
\begin{figure}[h]
\centering
\tikzstyle{block} = [draw, fill=white, rectangle,
   minimum height=3em, minimum width=6em]
\tikzstyle{sum} = [draw, fill=white, circle, node distance=1cm]
\tikzstyle{input} = [coordinate]
\tikzstyle{output} = [coordinate]
\tikzstyle{pinstyle} = [pin edge={to-,thin,black}]
\scalebox{0.8}{\begin{tikzpicture}[auto, node distance=2cm,>=latex']
\node (theta) {\large$\bm{x_n}$};
\node [sum,fill=blue!20, above right=1cm of theta, xshift=1cm] (perturb) {\large$\bm{+}$};
\node [sum,fill=red!20, below right=1cm of theta, xshift=1cm] (perturb1) {\large$\bm{-}$};
\node [above=0.5cm of perturb] (noise) {\large$\bm{\delta_n d_n}$};
\node [below=0.5cm of perturb1] (noise1) {\large$\bm{\delta_n d_n}$};    
\node [block, fill=blue!20, right=1cm of perturb,label=above:{\color{bleu2}$\bm{y_n^+}$}] (psim) {\large\bf $\bm{f(x_n+\delta_n d_n)  + \xi_n^+}$}; 
\node [block, fill=red!20, right=1cm of perturb1,label=below:{\color{bleu2}$\bm{y_n^-}$}] (sim) {\large\bf $\bm{f(x_n-\delta_n d_n)  - \xi_n^-}$}; 
\node [block,fill=green!20, below right=2cm of psim, minimum height=10em, yshift=2.7cm,text width=3cm] (update) {\large\bf{~Update $\bm{x_{n+1}}$}\\[2ex]\large\bf{~~~using \eqref{eq:1rdsa}}};
\node [right=0.7cm of update] (thetanext) {\large$\bm{x_{n+1}}$};

\draw [->] (perturb) --  (psim);
\draw [->] (perturb1) --  (sim);
\draw [->] (noise) -- (perturb);
\draw [->] (noise1) -- (perturb1);
\draw [->] (psim) -- 
(update.138);
\draw [->] (sim) --  
(update.217);
\draw [->] (update) -- (thetanext);
\draw [->] (theta) --   (perturb);
\draw [->] (theta) --   (perturb1);
\end{tikzpicture}}
\caption{Overall flow of 1-RDSA algorithm.}
\label{fig:1rdsa-flow}
\end{figure}
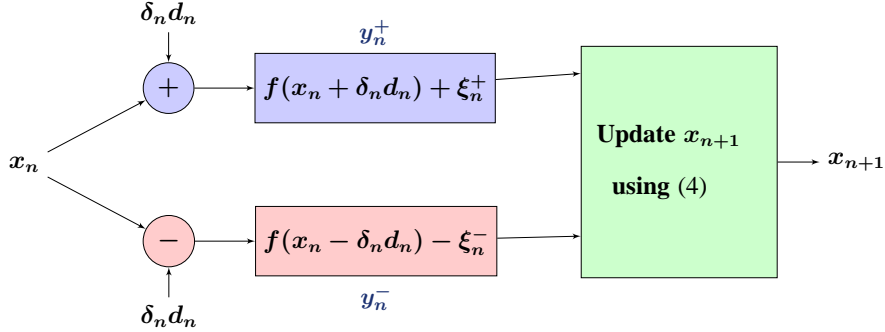

Recall that a
 first-order gradient search scheme for solving \eqref{eq:pb} has the following form: 
\begin{align}
\label{eq:1rdsa}
x_{n+1} = x_n - a_n \widehat\nabla f(x_n), 
\end{align}
where $\widehat\nabla f(x_n)$ is an estimate of $\nabla f(x_n)$.
As illustrated in Fig. \ref{fig:1rdsa-flow}, 
the idea behind an RDSA scheme is to obtain noisy measurements of $f$ at parameter values $x_n+\delta_n d_n$ and $x_n - \delta_n d_n$. Denote these respective values by $y_n^+$ and $y_n^-$, i.e., 
$$
y_n^+ = f(x_n+\delta_n d_n) + \xi_n^+,\quad y_n^- = f(x_n-\delta_n d_n) + \xi_n^-.
$$
In the above,  the noise tuple $\{\xi_n^+, \xi_n^-, n\ge 0\}$ is a martingale difference sequence, the sequence of
the perturbation constants $\{\delta_n, n\ge 0\}$ is a positive and asymptotically vanishing sequence and
the random perturbations $d_n=(d_n^1,\ldots,d_n^N)\tr$ are such that $\{d_n^i, i=1,\ldots,N, n=1,2,\ldots\}$ are i.i.d. and independent of the noise sequence. These quantities are assumed to satisfy the conditions in (A2)-(A5) in Section \ref{sec:2rdsa-results} below. 

In the next section, we specify two different choices for $d_n$ for obtaining the gradient estimate using the noisy function measurements $y_n^+$ and $y_n^-$, respectively. The first choice uses (continuous-valued) uniform random variables, while the second is based on (discrete-valued) asymmetric Bernoulli random variates.

\subsection{Gradient estimate}

\subsubsection*{\textbf{Uniform perturbations}}
Choose $d_n^i$, $ i=1,\ldots,N$ to be i.i.d. $U[-\eta,\eta]$ for some $\eta>0$, where $U[-\eta,\eta]$ denotes the uniform distribution on the interval $[-\eta,\eta]$.
The RDSA estimate of the gradient is given by
\begin{align}
\label{eq:grad-unif}
\widehat\nabla f(x_n) = \frac3{\eta^2} d_n \left[ \dfrac{y_n^+ - y_n^-}{2\delta_n}\right].
\end{align}

\subsubsection*{\textbf{Asymmetric Bernoulli perturbations}}
Choose $d_n^i$, $i=1,\ldots,N$, i.i.d. as follows: 
\begin{equation}
\label{eq:det-proj}
 d_n^i =
  \begin{cases}
   -1 &  \text{ w.p. } \dfrac{(1+\epsilon)}{(2+\epsilon)}, \\
   1+\epsilon &  \text{ w.p. } \dfrac{1}{(2+\epsilon)},
  \end{cases}
\end{equation}
where $\epsilon>0$ is a constant that can be chosen to be arbitrarily small.
Note that $E d_n^i = 0$, $E (d_n^i)^2 = 1+\epsilon$ and $E (d_n^i)^4 = \dfrac{(1+\epsilon)(1+(1+\epsilon)^3)}{(2+\epsilon)}$.

Then, the RDSA estimate of the gradient is given by
\begin{align}
\label{eq:grad-ber}
\widehat\nabla f(x_n) = \frac1{1+\epsilon} d_n \left[ \dfrac{y_n^+ - y_n^-}{2\delta_n}\right].
\end{align}

For notational simplicity, we use $\widehat\nabla f(x_n)$ to denote the gradient estimate for both uniform and asymmetric Bernoulli distributions, where the underlying perturbations should be clear from the context.

\subsubsection*{\textbf{Motivation for the gradient estimates}}
Lemma \ref{lemma:1rdsa-bias} below establishes that the gradient estimates in \eqref{eq:grad-unif} and \eqref{eq:grad-ber} are biased by a term of order $O(\delta_n^2)$, and this bias vanishes since $\delta_n \rightarrow 0$ (see (A5) below).
 The proof uses suitable Taylor's series expansions (as in \cite{spall}) to obtain the following for both uniform and asymmetric Bernoulli perturbations:
 \begin{align*}
f(x_n \pm \delta_n d_n) = f(x_n) \pm \delta_n d_n\tr \nabla f(x_n) + \frac{\delta_n^2}{2} d_n\tr \nabla^2 f(x_n) d_n +  O(\delta_n^3).
\end{align*}
Hence, as is shown in the proof of Lemma \ref{lemma:1rdsa-bias},
$$\E\left[d_n\left.\left(\dfrac{f(x_n+\delta_n d_n) - f(x_n-\delta_n d_n)}{2\delta_n}\right)\right| \F_n \right]  =  \E\left[d_n d_n\tr \right] \nabla f(x_n)  + O(\delta_n^2),$$
where $\F_n = \sigma(x_m,m\le n)$ denotes the underlying sigma-field.
For the case of uniform perturbations, it is easy to see that $\E\left[d_n d_n\tr \right] = \frac{\eta^2}{3}$ and since we have a scaling factor of $\frac{3}{\eta^2}$ in \eqref{eq:grad-unif}, the correctness of gradient estimate follows as $\delta_n \rightarrow 0$. A similar argument holds for the case of asymmetric Bernoulli perturbations.

\begin{remark}(\textbf{Why uniform/asymmetric Bernoulli perturbations?}) 
Previous studies, see
\cite[Section 2.3.5]{kushcla} and \cite{chin1997comparative}), assumed the perturbation vector $d_n$ to be uniformly distributed over the surface of the unit sphere, whereas we consider alternative uniform/asymmetric Bernoulli perturbations for the following reasons:
\begin{description}
\item[Sample efficiency:] Let $\hat n_{\text{RDSA-Unif}}$, $\hat n_{\text{RDSA-AsymBer}}$ and $\hat n_{\text{RDSA-Gaussian}}$ denote the number of function measurements required to achieve a given accuracy using uniform, asymmetric Bernoulli and Gaussian distributed perturbations in 1RDSA, respectively. Further, let $\hat n_{\text{SPSA}}$ denote a similar number for the regular SPSA scheme with symmetric Bernoulli perturbations. Then, as discussed in detail in Section \ref{sec:amse-1rdsa}, 
we have the following ratio: 
$$ \hat n_{\text{RDSA-Unif}}:\hat n_{\text{RDSA-AsymBer}}:\hat n_{\text{RDSA-Gaussian}}:\hat n_{\text{SPSA}} = 1.8: (1+\epsilon):3:1. $$
Notice that RDSA with Gaussian perturbations requires many more measurements ($3$ times) than regular SPSA. Uniform perturbations bring down this ratio, but they are still significantly sub-optimal in comparison to SPSA. On the other hand, asymmetric Bernoulli perturbations exhibit the best ratio that can be made arbitrarily close to $1$, by choosing the distribution parameter $\epsilon$ to be a very small positive constant.

\item[Computation:]
Generating perturbations uniformly distributed over the surface of the unit sphere involves simulating $N$ Gaussian random variables, followed by normalization \cite{marsaglia1972choosing}. 
In comparison, uniform/asymmetric Bernoulli perturbations  are easier to generate and 
do not involve normalization.  
\end{description}
\end{remark}

\begin{remark}(\textbf{Convex Optimization})
In \cite{duchi2013optimal}, an RDSA-based gradient estimate has been successfully employed for stochastic convex optimization, where the optimal $O(n^{-1/2})$ rate can be obtained.
 As in \cite{chin1997comparative}, the authors in \cite{duchi2013optimal} impose the condition that $\E\left[d_n d_n\tr \right] = I$, where $I$ is the identity matrix and suggest Gaussian random variables as one possibility. It is easy to see that uniform and asymmetric Bernoulli perturbations work well in the stochastic convex optimization setting as well.
\end{remark}

\subsection{Main results}
\label{sec:1rdsa-results}
Recall that $\F_n = \sigma(x_m,m\le n)$ denotes the underlying sigma-field.
We  make the following assumptions\footnote{All norms are taken to be the Euclidean norm.}:
\begin{enumerate}[label=(\textbf{A\arabic*})]
\item $f:\R^N\rightarrow \R$ is three-times continuously differentiable\footnote{Here $\nabla^3 f(x) = \dfrac{\partial^3 f (x)}{\partial x\tr \partial x\tr \partial x\tr}$ denotes the third derivate of $f$ at $x$ and $\nabla^3_{i_1 i_2 i_3} f(x)$ denotes the $(i_1 i_2 i_3)$th entry of $\nabla^3 f(x)$, for $i_1, i_2, i_3=1,\ldots, N$.}  with $\left|\nabla^3_{i_1 i_2 i_3} f(x) \right| < \alpha_0 < \infty$, for $i_1, i_2, i_3=1,\ldots, N$ and for all $x\in \R^N$. 
\item $\{\xi_n^+,\xi_n^-, n=1,2,\ldots\}$ satisfy $\E\left[\left.\xi_n^+ - \xi_n^- \right| d_n, \F_n\right] = 0$.
\item For some $\alpha_1, \alpha_2, \zeta >0$ and for all $n$, 
$\E \left|\xi_n^{\pm}\right|^{2+\zeta} \le \alpha_1$, $\E \left|f(x_n\pm \delta_n d_n)\right|^{2+\zeta} \le \alpha_2$. 
\item $\{d_n^i, i=1,\ldots,N, n=1,2,\ldots\}$ are i.i.d. and independent of $\F_n$.
\item The step-sizes $a_n$ and perturbation constants $\delta_n$ are positive, for all $n$ and satisfy
$$a_n, \delta_n \rightarrow 0\text{ as } n \rightarrow \infty, 
\sum_n a_n=\infty \text{ and } \sum_n \left(\frac{a_n}{\delta_n}\right)^2 <\infty.$$
\item $\sup_n \left\| x_n \right\| < \infty$ w.p. $1$.
\end{enumerate}
The above assumptions are standard in the analysis of simultaneous perturbation methods, cf. \cite{bhatnagar-book}. In particular,
\begin{itemize}
\item  (A1) is required to ensure the underlying ODE is well-posed and also for establishing the asymptotic unbiasedness of the RDSA-based gradient estimates.  A similar assumption is required for regular SPSA as well (see Lemma 1 in \cite{spall}).
\item (A2) requires that the noise $\xi_n^+,\xi_n^-$ is a martingale difference for all $n$, while the second moment bounds in (A3) are necessary to ensure that the effect of noise can be ignored in the (asymptotic) analysis of the 1RDSA recursion \eqref{eq:1rdsa}.  
\item (A4) is crucial in establishing that the gradient estimates in \eqref{eq:grad-unif} and \eqref{eq:grad-ber} are unbiased in an asymptotic sense, because one obtains terms of the form $\E (d_n \xi_n^{\pm} \mid \F_n)$ after separating the function value $f(x_n \pm \delta_n d_n)$ and the noise $\xi_n^{\pm}$ in  \eqref{eq:grad-unif}/\eqref{eq:grad-ber}. The independence requirement in (A4) ensures that $\E (d_n (\xi_n^{+} - \xi_n^-) \mid \F_n) = \E(d_n \E ((\xi_n^{+} - \xi_n^-) \mid d_n, \F_n))=0$. See Lemma \ref{lemma:1rdsa-bias} for the proof details that utilise (A4).
\item  The step-size conditions in (A5) are standard stochastic approximation requirements, while the condition that $\sum_n \left(\frac{a_n}{\delta_n}\right)^2 <\infty$ is necessary to bound a certain martingale difference term that arises in the analysis of \eqref{eq:1rdsa}. See the proof of Theorem \ref{thm:1rdsa-strong-conv}.
\item (A6) is a stability assumption required to ensure that \eqref{eq:1rdsa} converges and is common to the analysis of stochastic approximation algorithms, which include simultaneous perturbation schemes (cf. \cite{spall,spall_adaptive,bhatnagar-book}). Note that (A6) is not straightforward to show in many scenarios. However, a standard trick to ensure boundedness is to project the iterate $x_n$ onto a compact and convex set - see  the discussion on pp. 40-41 of \cite{kushcla} and also remark E.1 of \cite{bhatnagar-book}. 
\end{itemize}

We next present three results that hold for uniform as well as asymmetric Bernoulli perturbations: First, Lemma \ref{lemma:1rdsa-bias} establishes that the bias in the gradient estimates (\ref{eq:grad-unif}) and \eqref{eq:grad-ber} is of the order $O(\delta_n^2)$. Second, Theorem \ref{thm:1rdsa-strong-conv} proves that the iterate $x_n$ governed by \eqref{eq:1rdsa} converges a.s. and finally, Theorem \ref{thm:1rdsa-asymp-normal} provides a central limit theorem-type result. 

\begin{lemma}(\textbf{Bias in the gradient estimate})
\label{lemma:1rdsa-bias}
Under (A1)-(A6), for $\widehat\nabla f(x_n)$ defined according to either \eqref{eq:grad-unif} or \eqref{eq:grad-ber}, we have a.s. that\footnote{Here $\widehat\nabla_i f(x_n)$ and $\nabla_i f(x_n)$ denote the $i$th coordinates in the gradient estimate $\widehat\nabla f(x_n)$ and true gradient $\nabla f(x_n)$, respectively.}
\begin{align}
 \left| \E\left[\left.\widehat\nabla_i f(x_n)\right| \F_n \right] - \nabla_i f(x_n)\right| = O(\delta_n^2),  \quad \text{ for } i=1,\ldots,N.
\end{align} 
\end{lemma}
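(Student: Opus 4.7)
The plan is to start from the explicit formulas \eqref{eq:grad-unif} and \eqref{eq:grad-ber}, substitute $y_n^{\pm}=f(x_n\pm\delta_n d_n)+\xi_n^{\pm}$, and expose the bias by a Taylor expansion of $f$ around $x_n$. Concretely, using (A1) together with Taylor's theorem with integral (or Lagrange) remainder, I would write
\[
f(x_n\pm\delta_n d_n)=f(x_n)\pm\delta_n d_n\tr\nabla f(x_n)+\tfrac{\delta_n^2}{2}d_n\tr\nabla^2f(x_n)d_n+R_n^{\pm},
\]
where $|R_n^{\pm}|\le \tfrac{\alpha_0}{6}\delta_n^3\|d_n\|_1^{3}$ uniformly in $x_n$. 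Subtracting, the zeroth- and second-order terms cancel, yielding
\[
\frac{y_n^+-y_n^-}{2\delta_n}=d_n\tr\nabla f(x_n)+\frac{R_n^+-R_n^-}{2\delta_n}+\frac{\xi_n^+-\xi_n^-}{2\delta_n}.
\]

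Next I would multiply by $c\, d_n$, where $c=3/\eta^2$ in the uniform case and $c=1/(1+\epsilon)$ in the asymmetric Bernoulli case, and take $\E[\,\cdot\mid\F_n]$. The analysis splits into three pieces. \emph{Main term:} by (A4), $d_n$ is independent of $\F_n$, so $\E[d_nd_n\tr\mid\F_n]=\E[d_nd_n\tr]$. Using independence and zero mean of the components, this matrix is diagonal; direct moment computations give $\E[d_nd_n\tr]=(\eta^2/3)I$ in the uniform case and $(1+\epsilon)I$ in the asymmetric Bernoulli case (the second moment $1+\epsilon$ is already noted after \eqref{eq:det-proj}). In either case the scaling $c$ was chosen precisely to make $c\,\E[d_nd_n\tr]=I$, so this contribution equals $\nabla f(x_n)$. \emph{Noise term:} by (A4), $d_n$ is independent of $\F_n$ and of $\xi_n^{\pm}$ through (A2), so the tower property gives
\[
\E\!\left[\left.\frac{d_n(\xi_n^+-\xi_n^-)}{2\delta_n}\,\right|\F_n\right]=\E\!\left[\frac{d_n}{2\delta_n}\,\E[\xi_n^+-\xi_n^-\mid d_n,\F_n]\,\Big|\,\F_n\right]=0.
\]
\emph{Remainder:} the $i$th coordinate of $c\,\E[d_n(R_n^+-R_n^-)/(2\delta_n)\mid\F_n]$ is bounded by $c\,\tfrac{\alpha_0\delta_n^2}{6}\,\E[|d_n^i|\,\|d_n\|_1^{3}]$, which is $O(\delta_n^2)$ since fourth moments of $d_n^i$ are finite (uniform perturbations are bounded, and the asymmetric Bernoulli fourth moment is given explicitly in the text).

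Combining the three contributions yields $\E[\widehat\nabla_if(x_n)\mid\F_n]=\nabla_i f(x_n)+O(\delta_n^2)$ a.s., which is the claim. The main technical nuisance I expect is bookkeeping the Taylor remainder: one must be careful that the $O(\delta_n^3)$ in the pointwise expansion hides a factor polynomial in $d_n$ (degree three from the third derivative), so that after multiplying by $d_n$ the resulting term involves a fourth-degree polynomial in $d_n$ whose conditional expectation must still be bounded uniformly in $n$. This is where (A1) (uniform bound on third derivatives) and the finite fourth moments of the perturbations (immediate for uniform, explicit for asymmetric Bernoulli) are jointly used; everything else is essentially a moment calculation and an application of independence.
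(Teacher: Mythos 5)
Your proposal is correct and follows essentially the same route as the paper's proof: a third-order Taylor expansion under (A1), cancellation of the noise term via (A2) and (A4), the moment identity $c\,\E[d_nd_n\tr]=I$ for the leading term, and an $O(\delta_n^2)$ bound on the remainder using finite fourth moments of the perturbations. The only cosmetic difference is that you package the remainder bound via $\|d_n\|_1^3$ where the paper writes out the triple sum over third-derivative indices explicitly; the content is identical.
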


\begin{proof}
We use the proof technique of \cite{spall} (in particular, Lemma 1 there) in order to prove the main claim here.
 
Notice that
\begin{align*}
\E\left[\left.\dfrac{y_n^+ - y_n^-}{2\delta_n} \right| \F_n\right] 
= &\E\left[d_n\left.\left(\dfrac{f(x_n+\delta_n d_n) - f(x_n-\delta_n d_n)}{2\delta_n}\right) \right| \F_n\right] + \E\left[  d_n\left.\left(\dfrac{\xi_n^+ - \xi_n^-}{2\delta_n}\right) \right| \F_n\right]\\
= & \E\left[d_n\left.\left(\dfrac{f(x_n+\delta_n d_n) - f(x_n-\delta_n d_n)}{2\delta_n}\right) \right| \F_n\right].
\end{align*}
The last equality above follows from (A2) and (A4). We now analyse the term on the RHS above for both uniformly distributed perturbations and asymmetric Bernoulli perturbations.

\textbf{Case 1: Uniform perturbations}

Let $\nabla^2 f(\cdot)$ denote the Hessian of $f$.
By Taylor's series expansions, we obtain, a.s.,
\begin{align*}
f(x_n \pm \delta_n d_n) = f(x_n) \pm \delta_n d_n\tr \nabla f(x_n) + \frac{\delta_n^2}{2} d_n\tr \nabla^2 f(x_n) d_n \pm  \frac{\delta_n^3}{6} \nabla^3 f(\tilde  x_n^+)(d_n \otimes d_n \otimes d_n),
\end{align*}
where $\otimes$ denotes the Kronecker product and $\tilde x_n^+$ (resp. $\tilde x_n^-$) are on the line segment between $x_n$ and $(x_n + \delta_n d_n)$ (resp. $(x_n - \delta_n d_n)$).
Hence,
\begin{align}
&\E\left[d_n\left.\left(\dfrac{f(x_n+\delta_n d_n) - f(x_n-\delta_n d_n)}{2\delta_n}\right)\right| \F_n \right] \nonumber\\
= &\E\left[d_n d_n\tr \left.\nabla f(x_n)\right| \F_n\right]  +   \E\left[\left.\frac{\delta_n^2}{12} d_n (\nabla^3 f(\tilde  x_n^+)+\nabla^3 f(\tilde  x_n^-))(d_n \otimes d_n \otimes d_n)\right| \F_n\right]. \label{eq:l1}
\end{align}
The first term on the RHS above can be simplified as follows:
\begin{align}
\E\left[d_n d_n\tr \left.\nabla f(x_n)\right| \F_n\right] = &\E\left[d_n d_n\tr \right] \nabla f(x_n) \nonumber\\
 = & \E\left[
\begin{array}{cccc}
(d_n^1)^2 & d_n^1 d_n^2 & \cdots & d_n^1 d_n^N\\
d_n^2 d_n^1 &(d_n^2)^2 &  \cdots & d_n^2 d_n^N\\
d_n^N d_n^1 & d_n^N d_n^2 & \cdots &  (d_n^N)^2 \\
\end{array}
\right]\nabla f(x_n) =  \dfrac{\eta^2}{3} \nabla f(x_n). \label{eq:l1-a}
\end{align}
In the above, the first equality follows from (A4) and the last equality in \eqref{eq:l1-a} follows from $\E[(d_n^i)^2] = \frac{\eta^2}{3}$ and $\E[d_n^i d_n^j] = \E[d_n^i] \E[d_n^j] = 0$ for $i\ne j$.

Now, the $l$th coordinate of the second term in the RHS of \eqref{eq:l1} can be upper-bounded as follows:
\begin{align}
&\E\left[\left.\frac{\delta_n^2}{12} d_n^l (\nabla^3 f(\tilde  x_n^+)+\nabla^3 f(\tilde  x_n^-))(d_n \otimes d_n \otimes d_n)\right| \F_n\right]\nonumber\\
\le & \dfrac{\alpha_0 \delta_n^2}{6} \sum_{i_1=1}^N\sum_{i_2=1}^N\sum_{i_3=1}^N \E\left( d_n^l d_n^{i_1} d_n^{i_2} d_n^{i_3}\right)\nonumber\\
\le & \dfrac{\alpha_0 \delta_n^2\eta^4 N^3}{6}.\label{eq:l2}
\end{align}
The first inequality above follows from (A1), while the second inequality follows from the fact that $\left|d_n^l \right| \le \eta$, $l=1,\ldots,N$.
The claim follows by plugging \eqref{eq:l1-a} and \eqref{eq:l2} into \eqref{eq:l1} .\\
\textbf{Case 2: Asymmetric Bernoulli perturbations}

The proof follows in an analogous fashion as above, after noting that the scaling factor of $\frac{1}{(1+\epsilon)}$ in \eqref{eq:grad-ber} cancels out  $\E[(d_n^i)^2] = (1+\epsilon)$ and the bound in \eqref{eq:l2} gets replaced by $\left(\dfrac{\alpha_0 \delta_n^2(1+\epsilon)^4 N^3}{6}\right)$.
\end{proof}

\begin{theorem}(\textbf{Strong Convergence})
\label{thm:1rdsa-strong-conv}
Let $x^*$ be an  asymptotically stable equilibrium of the following ordinary differential equation (ODE):
$
\dot{x}_t = -\nabla f(x_t),$ with  domain of attraction $D(x^*)$, i.e., $D(x^*)= \{x_0 \mid \lim_{t\rightarrow\infty} x(t\mid x_0) = x^*\}$, where $x(t\mid x_0)$ is the solution to the ODE with initial condition $x_0$. Assume (A1)-(A6) and also that there exists a compact subset $\mathcal D$ of $D(x^*)$ such that $x_n \in \mathcal D$ infinitely often. Here $x_n$ is governed by \eqref{eq:1rdsa} with the gradient estimate $\widehat\nabla f(x_n)$ defined according to either \eqref{eq:grad-unif} or \eqref{eq:grad-ber}. Then,  we have
$$x_n \rightarrow x^* \text{ a.s. as } n\rightarrow \infty.$$ 
\end{theorem}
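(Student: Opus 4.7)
The plan is to cast the recursion \eqref{eq:1rdsa} as a noisy discretization of the ODE $\dot x = -\nabla f(x)$ and then invoke a standard Kushner--Clark-style ODE result (see, e.g., Theorem 2.3.1 of \cite{kushcla} or Chapter 2 of \cite{borkar}). The two technical ingredients needed to apply such a theorem are (i) asymptotic negligibility of the bias and (ii) asymptotic negligibility of the cumulative martingale noise.

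\textbf{Decomposition.} First I would write
\begin{align*}
\widehat\nabla f(x_n) \;=\; \nabla f(x_n) + \beta_n + M_{n+1},
\end{align*}
where $\beta_n := \E[\widehat\nabla f(x_n)\mid \F_n] - \nabla f(x_n)$ is the conditional bias and $M_{n+1} := \widehat\nabla f(x_n) - \E[\widehat\nabla f(x_n)\mid \F_n]$ is a martingale difference with respect to $\{\F_n\}$. With this decomposition, the recursion \eqref{eq:1rdsa} becomes
\begin{align*}
x_{n+1} \;=\; x_n - a_n \nabla f(x_n) - a_n \beta_n - a_n M_{n+1}.
\end{align*}

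\textbf{Bias term.} By Lemma \ref{lemma:1rdsa-bias}, $\|\beta_n\| = O(\delta_n^2)$ a.s. Since $\delta_n \to 0$ by (A5), $\beta_n \to 0$ a.s., so $\sum a_n \beta_n$ is a vanishing perturbation in the sense required by the ODE method.

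\textbf{Martingale term.} This is the main technical step. I would show that $\sum_n a_n M_{n+1}$ converges a.s. by appealing to the martingale convergence theorem applied to $S_n := \sum_{k=0}^{n-1} a_k M_{k+1}$. It suffices to check $\sum_n a_n^2 \E[\|M_{n+1}\|^2 \mid \F_n] < \infty$ a.s. From the definition \eqref{eq:grad-unif} (resp. \eqref{eq:grad-ber}),
\begin{align*}
\|M_{n+1}\|^2 \;\le\; C\, \frac{\|d_n\|^2\bigl((f(x_n+\delta_n d_n)-f(x_n-\delta_n d_n))^2 + (\xi_n^+-\xi_n^-)^2\bigr)}{\delta_n^2},
\end{align*}
for some constant $C$ depending on $\eta$ (or $\epsilon$) and $N$. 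By (A1) and a Taylor expansion, $|f(x_n+\delta_n d_n) - f(x_n-\delta_n d_n)| = O(\delta_n \|d_n\|)$ on the bounded set guaranteed by (A6); combined with the bounded second-moment assumption on the noise in (A3) and boundedness/moment bounds for $d_n$, this yields $\E[\|M_{n+1}\|^2 \mid \F_n] = O(1 + 1/\delta_n^2)$ a.s. Hence $\sum_n a_n^2 \E[\|M_{n+1}\|^2 \mid \F_n] \le C' \sum_n (a_n^2 + (a_n/\delta_n)^2) < \infty$ by (A5). This is precisely where the condition $\sum(a_n/\delta_n)^2 < \infty$ is used.

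\textbf{ODE argument.} With (A6) giving $\sup_n\|x_n\| < \infty$ a.s., $\beta_n \to 0$, and $\sum a_n M_{n+1}$ converging a.s., a standard stochastic approximation result (Kushner--Clark lemma, or Theorem 2.1 in Chapter 5 of \cite{kushcla}) implies that the linearly interpolated trajectory of $\{x_n\}$ is a.s. an asymptotic pseudotrajectory of the ODE $\dot x = -\nabla f(x)$. Because $x^*$ is asymptotically stable with domain of attraction $D(x^*)$, and $\{x_n\}$ visits the compact set $\mathcal D \subset D(x^*)$ infinitely often by hypothesis, the standard argument (e.g., \cite[Thm.~2.3.1]{kushcla}) then forces $x_n \to x^*$ a.s.

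\textbf{Main obstacle.} The substantive part is the martingale estimate in the third paragraph: one needs the $O(1/\delta_n^2)$ growth in the conditional second moment of $M_{n+1}$ to be tamed by the step-size condition $\sum(a_n/\delta_n)^2 < \infty$. Everything else (bias control via Lemma \ref{lemma:1rdsa-bias}, stability via (A6), and invocation of the ODE theorem) is standard once this estimate is in hand.
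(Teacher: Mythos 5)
Your proposal is correct and follows essentially the same route as the paper: the same decomposition into gradient, bias, and martingale-difference terms, bias control via Lemma \ref{lemma:1rdsa-bias}, a conditional second-moment bound of order $O(1/\delta_n^2)$ tamed by $\sum_n(a_n/\delta_n)^2<\infty$, and an appeal to Theorem 2.3.1 of \cite{kushcla}. The only cosmetic difference is that the paper verifies the Kushner--Clark noise condition A2.2.4'' directly via Doob's maximal inequality (together with Minkowski and H\"older bounds from (A3)), whereas you deduce it from a.s. convergence of the series $\sum_n a_n M_{n+1}$ via the martingale convergence theorem; the two are interchangeable here.
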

\begin{proof}
As in the case of regular SPSA algorithm from \cite{spall}, the proof involves verifying assumptions A2.2.1 to A2.2.3 and A2.2.4'' of \cite{kushcla} in order to invoke Theorem 2.3.1 there. The reader is referred to Appendix \ref{sec:appendix-1rdsa} for the detailed proof.  
\end{proof}

We now present an asymptotic normality result for 1RDSA, for which  we require the following variant of (A3):

\noindent
(\textbf{A3'})  The conditions of (A3) hold. In addition, $\E(\xi_n^+ - \xi_n^-)^2 \rightarrow \sigma^2$ a.s. as $n\rightarrow \infty$. 

The main result is as follows:
\begin{theorem}(\textbf{Asymptotic Normality})
\label{thm:1rdsa-asymp-normal}
Assume (A1), (A2), (A3'), (A4)-(A6). 
Let $a_n=a_0/n^\alpha$ and $\delta_n = \delta_0/n^\gamma$, where $a_0,\delta_0 >0$,  $\alpha \in (0,1]$ and $\gamma \ge 1/6$. 
Let $\beta = \alpha - 2 \gamma>0$ and $P$ be an orthogonal matrix with $P\nabla^2 f(x)P\tr = \dfrac{1}{a_0} \text{diag}\left(\lambda_1,\ldots,\lambda_N\right)$. Then, 
\begin{align}
 n^{\beta/2}(x_n - x^*) \xrightarrow{dist} \N(\mu, PMP\tr) \text{ as } n\rightarrow \infty,
\end{align}
where $\N(\mu, PMP\tr)$ denotes the multivariate Gaussian distribution with mean $\mu$ and covariance matrix $PMP\tr$. 
The mean $\mu$ is defined as follows:
$\mu=0$ if $\gamma > \alpha/6$ and $\mu = k_\mu (a_0 \delta_0^2(2 a_0 \nabla^2 f(x^*)-\beta^+I)^{-1}T)$ if $\gamma = \alpha/6$, where
\begin{equation*}
\label{eq:mu}
 k_\mu =
  \begin{cases}
   3.6  &  \text{ for uniform perturbations,}\\
    \dfrac{2(1+\epsilon)(1+(1+\epsilon)^3)}{(2+\epsilon)(1+\epsilon)^2} &  \text{ for asymmetric Bernoulli perturbations.}
  \end{cases}
\end{equation*}
In the above, $I$ is the identity matrix of size $N\times N$, $\beta^+=\beta$ if $\alpha=1$ and $0$ if $\alpha <1$ and $T = (T^1,\ldots,T^N)\tr$ with 
$$T^l = -\frac{1}{6}  \left[ \nabla^3_{lll} f(x^*) + 3 \sum\limits_{i=1,i\ne l}^{N}\nabla^3_{iil} f(x^*)\right], l=1,\ldots,N.$$   
The covariance matrix $M$ is defined as follows:
$$ M = \dfrac{a_0^2\sigma^2}{4\delta_0^2} \text{diag}((2\lambda_1-\beta^+)^{-1},\ldots,(2\lambda_N-\beta^+)^{-1}).$$
\end{theorem}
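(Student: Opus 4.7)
The plan is to invoke Fabian's classical CLT for stochastic approximation (Fabian, 1968), adapted to the SPSA-style setting as in \cite{spall}.  First, I would split the gradient estimator as
$\widehat\nabla f(x_n) = \nabla f(x_n) + b_n + e_n$,
where $b_n := \E[\widehat\nabla f(x_n)\mid \F_n] - \nabla f(x_n)$ and $e_n := \widehat\nabla f(x_n) - \E[\widehat\nabla f(x_n)\mid \F_n]$: by (A2) and (A4) the sequence $\{e_n\}$ is a martingale difference, and by Lemma~\ref{lemma:1rdsa-bias} we have $b_n = O(\delta_n^2)$ a.s.  Taylor-expanding $\nabla f$ around $x^*$ using $\nabla f(x^*)=0$ (allowed by (A1)) linearizes \eqref{eq:1rdsa} into
\begin{align*}
x_{n+1} - x^* = (I - a_n H)(x_n - x^*) - a_n e_n - a_n b_n + a_n R_n, \quad H := \nabla^2 f(x^*),
\end{align*}
with $R_n = O(\|x_n-x^*\|^2)$, which is asymptotically negligible thanks to Theorem~\ref{thm:1rdsa-strong-conv}.

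Next I would identify the two asymptotic objects that feed into Fabian's theorem.  For the noise, I would compute $\E[e_n e_n\tr\mid\F_n]$ by separating the estimator into its $f$-dependent and $\xi^\pm$-dependent parts: only the latter carries a factor of $1/\delta_n$ and contributes at leading order, and using (A3') together with $\E[d_n d_n\tr] = (\eta^2/3)I$ for uniform and $(1+\epsilon)I$ for asymmetric Bernoulli perturbations yields a limit of the form $\kappa\,\sigma^2/\delta_n^2\,I$ for an explicit perturbation-specific $\kappa$.  For the bias I would extend the Taylor expansion in the proof of Lemma~\ref{lemma:1rdsa-bias} by one order and compute $\delta_n^{-2} b_n$ coordinatewise: only the fourth-order moments $\E[(d_n^i)^4]$ (producing the $\nabla^3_{lll}f(x^*)$ term of $T^l$) and $\E[(d_n^i)^2(d_n^j)^2]$ for $i\neq j$ (producing, via three index permutations, the $\sum_{i\neq l}\nabla^3_{iil}f(x^*)$ term) contribute --- all other mixed products of the $d_n^i$'s vanish in expectation by (A4), and in the asymmetric Bernoulli case any candidate contribution involving the nonzero odd moment $\E[(d_n^i)^3]$ is killed by an accompanying factor $\E[d_n^j]=0$.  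This identifies the limit with $k_\mu T$ and lets one read off the perturbation-specific scalar $k_\mu$.

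With $a_n=a_0/n^\alpha$, $\delta_n=\delta_0/n^\gamma$, and $\beta=\alpha-2\gamma$, the linearized recursion matches the Fabian canonical form
\begin{align*}
U_{n+1} = (I - n^{-\alpha}\tilde H)U_n + n^{-(\alpha+\beta)/2}\Phi_n + n^{-\alpha-\beta/2}\tau_n, \quad \tilde H := a_0 H,
\end{align*}
once the time-dependent rescalings from the preceding step produce a $\Phi_n$ with bounded conditional covariance and a $\tau_n$ that converges to $k_\mu a_0 \delta_0^2 T$ precisely when $\gamma=\alpha/6$ (and to $0$ when $\gamma>\alpha/6$).  Applying Fabian's CLT --- whose conditional Lindeberg hypothesis is verified from the $(2+\zeta)$-moment bound in (A3') --- then yields the stated convergence of $n^{\beta/2}(x_n-x^*)$ in distribution to $\N(\mu, PMP\tr)$, with $\mu$ obtained as $(2a_0 H - \beta^+ I)^{-1}$ applied to the bias limit and with $M$ found by diagonalizing via $P$ and solving coordinatewise the scalar Lyapunov equations $(2\lambda_i - \beta^+) M_{ii} = $ (scaled noise variance).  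The principal obstacle is the fourth-moment bookkeeping underlying $\delta_n^{-2}b_n \to k_\mu T$ --- particularly the need, in the asymmetric Bernoulli case, to verify that the non-vanishing cubic moment does not inject extra terms into $T^l$ --- while a secondary technical point is controlling the remainder $R_n$ at the $n^{\beta/2}$ scale, which uses the a.s.\ convergence guaranteed by Theorem~\ref{thm:1rdsa-strong-conv}.
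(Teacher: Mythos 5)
Your proposal is correct in substance, but it takes a noticeably more self-contained route than the paper. The paper's entire proof is a citation: it invokes Proposition~1 of \cite{chin1997comparative} (a general asymptotic-normality result for RDSA-type estimates) and merely verifies the two normalized moment identities that proposition requires, namely $\frac{3}{\eta^2}\E[d_n d_n\tr]=I$ with $\frac{9}{\eta^4}\E[(d_n^i)^4]=1.8$ for the uniform case, and $\frac{1}{1+\epsilon}\E[d_n d_n\tr]=I$ with $\frac{1}{(1+\epsilon)^2}\E[(d_n^i)^4]=\frac{(1+\epsilon)(1+(1+\epsilon)^3)}{(2+\epsilon)(1+\epsilon)^2}$ for the asymmetric Bernoulli case. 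What you have written is essentially a reconstruction of the argument the paper outsources: the bias/martingale decomposition, linearization around $x^*$, identification of the drift, noise covariance, and bias limit, and an appeal to Fabian's theorem \cite{fabian1968asymptotic} --- which is exactly the machinery underlying the cited proposition (and is the route the paper itself takes explicitly for the second-order analogue, Theorem~\ref{thm:2rdsa-asymp-normal}, in Appendix~\ref{sec:appendix-2rdsa}). Your version buys transparency: it makes visible where $k_\mu$, $T$, and $M$ come from, and it correctly isolates the two genuinely nontrivial verification points (the fourth-moment bookkeeping for $\delta_n^{-2}b_n$, including the observation that the nonzero cubic moment of the asymmetric Bernoulli variable is always paired with a vanishing first moment; and the Lindeberg condition from the $(2+\zeta)$-moment bound). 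The paper's citation buys brevity and guarantees that the constants land in exactly the stated form. One caution if you carry out your sketch in full: the claim that the bias limit is a \emph{single} scalar $k_\mu$ times the fixed vector $T$ depends on how the normalized fourth moment $\E[(d_n^l)^4]/(\E[(d_n^l)^2])^2$ and the normalized cross moment $\E[(d_n^l)^2(d_n^i)^2]/(\E[(d_n^l)^2])^2$ enter the coefficients of $\nabla^3_{lll}f$ and $\sum_{i\neq l}\nabla^3_{iil}f$ respectively; these are precisely the quantities Chin's proposition is parameterized by, and a from-scratch derivation must track them separately rather than assuming they collapse to one constant.
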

\begin{proof}
Follows from Proposition 1 of \cite{chin1997comparative} after observing the following facts:

 \textit{Uniform perturbations:} $\dfrac{3}{\eta^2} \E[d_n d_n\tr] = I$ and $\dfrac{9}{\eta^4} \E[(d^i_n)^4] = 1.8$ for any $i=1,\ldots,N$.
 
 \textit{Asymmetric Bernoulli perturbations:} $\dfrac{1}{(1+\epsilon)} \E[d_n d_n\tr] = I$ and \\$\dfrac{1}{(1+\epsilon)^2} \E[(d^i_n)^4] = \dfrac{(1+\epsilon)(1+(1+\epsilon)^3)}{(2+\epsilon)(1+\epsilon)^2}$ for any $i=1,\ldots,N$.
\end{proof}

\subsection{(Asymptotic) convergence rates}
\label{sec:amse-1rdsa}
The result in Theorem \ref{thm:1rdsa-asymp-normal} shows that $n^{\beta/2}(x_n - x^*)$ is asymptotically Gaussian for 1RDSA under both perturbation choices. 
The asymptotic mean square error of $n^{\beta/2}(x_n - x^*)$, denoted by $\mathcal{AMSE}_{1\mathcal{RDSA}}(a,c)$, is given by
$$\mathcal{AMSE}_{1\mathcal{RDSA}}(a_0,\delta_0) = \mu\tr\mu + \text{trace}(PMP\tr),$$
where $a_0$ is the step-size constant, $\delta_0$ is the constant in the perturbation sequence $\delta_k$ and $\mu, P$ and $M$ are as defined in Theorem \ref{thm:1rdsa-asymp-normal}. 
Under certain assumptions (cf. \cite{gerencser1999convergence}), it can be shown that $\mathcal{AMSE}_{1\mathcal{RDSA}}(a,c)$ coincides with $n^\beta\E\l x_n-x^*\r^2$. From Theorem \ref{thm:1rdsa-asymp-normal}, it is easy to deduce from the conditions on step-size exponent  $\alpha$ and perturbation constant  exponent $\gamma$ that the range of $\beta$ is $0$ to $2/3$. Following the discussion in Section III-A of \cite{chin1997comparative}, a common value of $\beta=2/3$ is optimal for all first-order algorithms, with $\alpha=1$ and $\gamma = 1/6$. 

With step-size $a_n = a_0/n$, setting $a_0$ optimally requires knowledge of the minimum eigenvalue $\lambda_0$ of the Hessian $\nabla^2 f(x^*)$, i.e., $a_0 > \beta/2\lambda_0$.
Under this choice, we obtain
\begin{align*}
 \mathcal{AMSE}_{1\mathcal{RDSA-}\textit{Unif}}(a_0,\delta_0) =& \left(3.6 \delta_0^2 a_0 \l (2a_0\nabla^2 f(x^*) - \beta)^{-1} T \r \right)^2 
\\&+ \delta_0^{-2} \text{trace}\left( (2a_0 \nabla^2 f(x^*) - \beta)^{-1} S\right),
\end{align*}
where $T$ is as defined in Theorem \ref{thm:1rdsa-asymp-normal} and $S= \dfrac{\sigma^2}{4} I$.

\begin{remark}\textbf{\textit{(On step-size dependency)}}
Since  $\lambda_0$ is unknown, obtaining the above rate is problematic and one can get rid of the dependency of $a_0$ on $\lambda_0$ either by averaging of iterates or employing an adaptive (second-order) scheme. The former would employ step-size $a_n = a_0/n^\alpha$, with $\alpha \in (1/2,1)$ and couple this choice with averaging of iterates as $\bar x_n = 1/n \sum_{m=1}^{n} x_m$. The latter adaptive scheme would correspond to 2RDSA, which performs a Newton step to update $x_n$ in \eqref{eq:2rdsa}. Section \ref{sec:2rdsa} presents 2RDSA along with an AMSE analysis that compares to 2SPSA.
\end{remark}

\subsubsection*{\textbf{Comparing AMSE of 1RDSA-Unif to that of 1SPSA}}
Taking the ratio of AMSE of 1RDSA with uniform perturbations to that of 1SPSA with symmetric Bernoulli $\pm 1$-valued perturbations, we obtain:
\begin{align*}
 &\dfrac{\mathcal{AMSE}_{1\mathcal{RDSA-}\textit{Unif}}(a_0,\delta_0)}{\mathcal{AMSE}_{1\mathcal{SPSA}}(a_0,\delta_0)} \\
 =& \dfrac{\left(2 \delta_0^2 a_0 \l (2a_0 \nabla^2 f(x^*) - \beta)^{-1} T \r 1.8 \right)^2 + a_0\delta_0^{-2} \text{trace}\left( (2a_0 \nabla^2 f(x^*) - \beta)^{-1} S\right)}{\left(2 \delta_0^2 a_0 \l (2a_0 \nabla^2 f(x^*) - \beta)^{-1} T \r\right)^2 + a_0\delta_0^{-2} \text{trace}\left( (2a_0 \nabla^2 f(x^*) - \beta)^{-1} S\right)}\\
=& 1 + \dfrac{2.24}{1+\left(a_0\delta_0^{-2} \text{trace}\left( (2a_0 \nabla^2 f(x^*) - \beta)^{-1} S\right)\right)/\left(2 \delta_0^2 a_0 \l (2a_0 \nabla^2 f(x^*) - \beta)^{-1} T \r \right)^2}.
\end{align*}
From the above, we observe that 1SPSA has a better AMSE in comparison to 1RDSA, but it is not clear if the difference is `large'. This is because the ratio in the denominator above  depends on the objective function (via $\nabla^2 f(x^*)$ and $T$) and a high ratio value would make the difference between 1RDSA and 1SPSA negligible. Contrast this with the $1.8:1$ ratio obtained if one knows the underlying objective function (see Remark \ref{remark:1rdsa-gaussian} below).

\subsubsection*{\textbf{Comparing AMSE of 1RDSA-AsymBer  to that of 1SPSA}}
Observe that $\dfrac{1}{(1+\epsilon)} \E[d_n d_n\tr] = I$ and so, $\dfrac{1}{(1+\epsilon)^2} \E[(d^i_n)^4] = \dfrac{\beta}{(1+\epsilon)^2}$ for any $i=1,\ldots,N$. If we used $U[-1,1]$ r.v. for perturbations, then $9\E[(d^i_n)^4] = \frac{9}{5}=1.8$ and this value causes AMSE of 1RDSA to be much higher than that of 1SPSA.

On the other hand, choosing $\epsilon = 0.01$ for 1RDSA with asymmetric Bernoulli perturbations, we obtain $\dfrac{1}{(1+\epsilon)^2} \E[(d^i_n)^4] = \dfrac{\beta}{(1+\epsilon)^2} = 1.000099$. Plugging this value into the AMSE calculation, we obtain:
\begin{align*}
 &\dfrac{\mathcal{AMSE}_{1\mathcal{RDSA-}\textit{AsymBer}}(a_0,\delta_0)}{\mathcal{AMSE}_{1\mathcal{SPSA}}(a_0,\delta_0)} \\
 =& \dfrac{\left(2 \delta_0^2 a_0 \l (2a_0 \nabla^2 f(x^*) - \beta)^{-1} T \r 1.000099 \right)^2 + a_0\delta_0^{-2} \text{trace}\left( (2a_0 \nabla^2 f(x^*) - \beta)^{-1} S\right)}{\left(2 \delta_0^2 a_0 \l (2a_0 \nabla^2 f(x^*) - \beta)^{-1} T \r\right)^2 + a_0\delta_0^{-2} \text{trace}\left( (2a_0 \nabla^2 f(x^*) - \beta)^{-1} S\right)}
\end{align*}
From the above, we observe that 1RDSA has an AMSE that is almost comparable to that of 1SPSA. One could choose a small $\epsilon$ to get this ratio arbitrarily close to $1$.

\begin{remark}(\textbf{1RDSA with Gaussian perturbations})
\label{remark:1rdsa-gaussian}
In \cite{chin1997comparative}, the author simplifies the AMSE for 1RDSA by solving  $\mathcal{AMSE}_{1\mathcal{RDSA}}(a_0,\delta_0)$ for $\delta_0$ after setting $a_0$ optimally using $\lambda_0$. Using $N(0,1)$ for $d_n$ and comparing the resulting AMSE of 1RDSA to that of first-order SPSA with symmetric Bernoulli distributed perturbations, they report a ratio of $3:1$ for the number of measurements to achieve a given accuracy. Here $3$ is a result of the fact that for $N(0,1)$ distributed $d_n$, $\E d_n^4=3$, while $1$ for SPSA comes from a bound on the second and inverse second moments, both of which are $1$ for the Bernoulli case. 
Using $U[-\eta,\eta]$ distributed $d_n$ in 1RDSA would bring down this ratio to $1.8:1$. However, this result comes with a huge caveat - that $a_0$ and $\delta_0$ are set optimally. Setting these quantities requires knowledge of the objective, specifically, $\nabla^2 f(x^*)$ and the vector $T$.
\end{remark}

\section{Second-order random directions SA (2RDSA)}
\label{sec:2rdsa}
As in the case of the first-order scheme, we present two variants of the second-order simulation optimization method: one using uniform perturbations and the other using asymmetric Bernoulli perturbations. 
Recall that a second-order adaptive search algorithm has the following form \cite{spall_adaptive}:
\begin{align}
\label{eq:2rdsa}
x_{n+1} = x_n - a_n \Upsilon(\overline H_n)^{-1}\widehat\nabla f(x_n), \\
\overline H_n = \frac{n}{n+1} \overline H_{n-1} + \frac{1}{n+1} \widehat H_n.\label{eq:2rdsa-H}
\end{align}
In the above, 
\begin{itemize}
 \item $\widehat\nabla f(x_n)$ is the estimate of $\nabla f(x_n)$ and this corresponds to \eqref{eq:grad-unif} for the uniform variant and \eqref{eq:grad-ber} for the asymmetric Bernoulli variant.
 \item $\widehat H_n$ is an estimate of the true Hessian ${\nabla}^2 f(\cdot)$, with $\widehat H_0 = I$. 
 \item $\overline H_n$ is a smoothed version of $\widehat H_n$, which is crucial to ensure convergence. 
 \item $\Upsilon$ is an operator that projects a matrix onto the set of positive definite matrices. Update \eqref{eq:2rdsa-H} does not necessarily ensure that $\overline H_n$ is invertible and without $\Upsilon$, the parameter update \eqref{eq:2rdsa} may not move along a descent direction - see conditions (C7) and (C12) in Section \ref{sec:2rdsa-results} below for the precise requirements on the matrix projection operator.
\end{itemize}

The basic algorithm in \eqref{eq:2rdsa}--\eqref{eq:2rdsa-H} is similar to the adaptive scheme analyzed by \cite{spall_adaptive}. However, we use RDSA for the gradient and Hessian estimates, while \cite{spall_adaptive} employs SPSA.

\begin{remark}\textbf{\textit{(Matrix projection)}}\label{remark:upsilon}
A simple way to define $\Upsilon(\overline H_n)$ is to first perform an eigen-decomposition of $\overline H_n$, followed by projecting all the eigenvalues onto the positive real line by adding a positive scalar $\delta_n$ - see \cite{gill1981practical}, \cite{spall_adaptive} for a similar operator. This choice for $\Upsilon$ satisfies the assumptions (C7) and (C12), which are required to ensure asymptotic unbiasedness of the Hessian scheme presented in the next section. 
Note that the scalar $\delta_n$ used for $\Upsilon$ is also used as a perturbation constant for function evaluations (see \eqref{eq:xi} below). 
\end{remark}

\begin{figure}[h]
\centering
\tikzstyle{block} = [draw, fill=white, rectangle,
   minimum height=3em, minimum width=6em]
\tikzstyle{sum} = [draw, fill=white, circle, node distance=1cm]
\tikzstyle{input} = [coordinate]
\tikzstyle{output} = [coordinate]
\tikzstyle{pinstyle} = [pin edge={to-,thin,black}]
\scalebox{0.75}{\begin{tikzpicture}[auto, node distance=2cm,>=latex']
\node (theta) {\large$\bm{x_n}$};
\node [sum,fill=blue!20, above right=2.5cm of theta, xshift=1cm] (perturb) {\large$\bm{+}$};
\node [sum,fill=red!20, below right=2.5cm of theta, xshift=1cm] (perturb1) {\large$\bm{-}$};
\node [above=0.5cm of perturb] (noise) {\large$\bm{\delta_n d_n}$};
\node [below=0.5cm of perturb1] (noise1) {\large$\bm{\delta_n d_n}$};    
\node [block, fill=blue!20, right=1cm of perturb,label=above:{\color{bleu2}$\bm{y_n^+}$}] (psim) {\large\bf $\bm{f(x_n+\delta_n d_n)  + \xi_n^+}$}; 
\node [block, fill=red!20, right=1cm of perturb1,label=below:{\color{bleu2}$\bm{y_n^-}$}] (sim) {\large\bf $\bm{f(x_n-\delta_n d_n)  - \xi_n^-}$}; 
\node [block, fill=brown!20, right=5.5cm of theta,label=above:{\color{bleu2}$\bm{y_n}$}] (unpsim) {\large\bf $\bm{f(x_n) + \xi_n}$}; 
\node [block,fill=green!20, below right=4cm of psim, minimum height=12.5em, yshift=3.5cm,text width=3cm] (update) {\large\bf{~~~Update }\\[2ex]\large\bf{~~using \eqref{eq:2rdsa}}};
\node [right=0.7cm of update] (thetanext) {\large$\bm{x_{n+1}}$};

\draw [->] (perturb) --  (psim);
\draw [->] (perturb1) --  (sim);
\draw [->] (noise) -- (perturb);
\draw [->] (noise1) -- (perturb1);
\draw [->] (psim) -- 
(update.135);
\draw [->] (sim) --  
(update.220);
\draw [->] (update) -- (thetanext);
\draw [->] (theta) --   (unpsim);
\draw [->] (unpsim) --   (update);
\draw [->] (theta) --   (perturb);
\draw [->] (theta) --   (perturb1);
\end{tikzpicture}}
\caption{Overall flow of 2-RDSA algorithm.}
\label{fig:2rdsa-flow}
\end{figure}
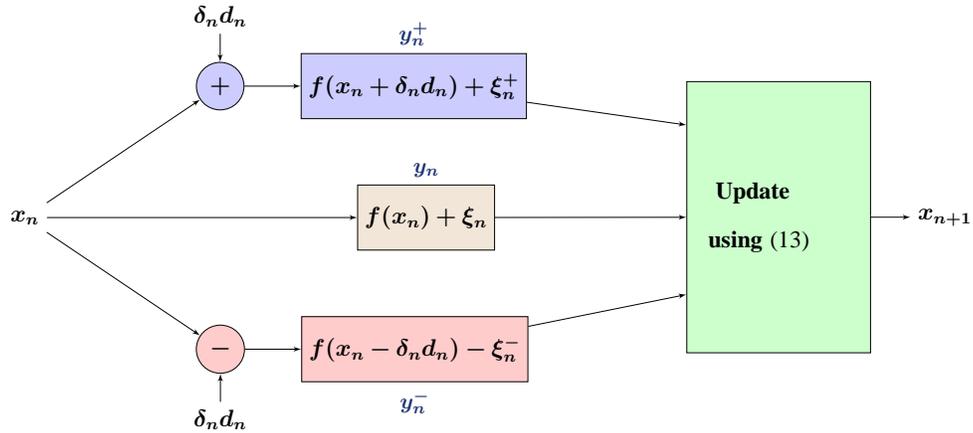

\subsection{Hessian estimate}
As illustrated in Fig. \ref{fig:2rdsa-flow}, we use three measurements per iteration in \eqref{eq:2rdsa} to estimate both the gradient and the Hessian of the objective $f$. These measurements correspond to parameter values $x_n$, $x_n+\delta_n d_n$ and $x_n - \delta_n d_n$. Let us denote these values by  $y_n$, $y_n^+$ and $y_n^-$ respectively, i.e., 
\begin{align}
\label{eq:xi}
y_n = f(x_n) + \xi_n, \quad y_n^+ = f(x_n+\delta_n d_n) + \xi_n^+,\quad y_n^- = f(x_n-\delta_n d_n) + \xi_n^-,
\end{align}
where the noise terms $\xi_n, \xi_n^+, \xi_n^-$ satisfy $\E\left[\left.\xi_n^+ + \xi_n^- - 2\xi_n \right| \F_n\right] = 0$. We next present two constructions for the perturbations $d_n$ - one based on i.i.d. uniform r.v.s and the other using asymmetric Bernoulli r.v.s. Unlike the construction in \cite{spall_adaptive}, which entails generating $2N$ Bernoulli r.v.s  in each iteration, our construction requires $N$ r.v.s that follow either a uniform or an asymmetric Bernoulli distribution.

\subsubsection*{\textbf{Uniform perturbations}}
Using the three measurements and the random directions obtained from $d_n$, we form the Hessian estimate $\widehat H_n$ as follows:
\begin{align}
\label{eq:2rdsa-estimate-unif}
&\widehat H_n = \dfrac{9}{2\eta^4} M_n \left(\dfrac{y_n^+ + y_n^- - 2 y_n}{\delta_n^2}\right), \\
&\hspace{-5em}\text{ where } M_n =
\left[
\begin{array}{cccc}
\frac{5}{2}\left((d_n^1)^2-\frac{\eta^2}{3}\right) & d_n^1 d_n^2 & \cdots & d_n^1 d_n^N\\
d_n^2 d_n^1 &\frac{5}{2}\left((d_n^2)^2-\frac{\eta^2}{3}\right) &  \cdots & d_n^2 d_n^N\\
d_n^N d_n^1 & d_n^N d_n^2 & \cdots &  \frac{5}{2}\left((d_n^N)^2-\frac{\eta^2}{3}\right) \\
\end{array}
\right].\nonumber
\end{align}
Lemma \ref{lemma:2rdsa-bias} establishes that the above estimator is of order $O(\delta_n^2)$ away from the true Hessian. The first step of the proof is to  use a suitable Taylor's series expansion of $f$ to obtain
\begin{align}
&\dfrac{f(x_n+\delta_n d_n) + f(x_n-\delta_n d_n)  - 2 f(x_n)}{\delta_n^2} \nonumber\\
=&  \sum\limits_{i=1}^N (d_n^i)^2 \nabla^2_{ii} f(x_n) + 2\sum\limits_{i=1}^{N-1}\sum\limits_{j=i+1}^N d_n^i d_n^j \nabla^2_{ij} f(x_n) + O(\delta_n^2). \nonumber
\end{align}
Taking conditional expectations on both sides above, it can be seen that the RHS does not simplify to be true Hessian and includes bias terms. By multiplying the term $\dfrac{9}{2\eta^4} M_n$ with the RHS above, we obtain an (asymptotically) unbiased Hessian estimate - see the passage starting from \eqref{eq:h1} in the proof of Lemma \ref{lemma:2rdsa-bias} below for details.

\subsubsection*{\textbf{Asymmetric Bernoulli perturbations}}
Using the three measurements and the random directions obtained from $d_n$, we form the Hessian estimate $\widehat H_n$ as follows:
\begin{align}
\label{eq:2rdsa-estimate-ber}
&\widehat H_n = M_n \left(\dfrac{y_n^+ + y_n^- - 2 y_n}{\delta_n^2}\right),\\
&\hspace{-3em}\text{ where } M_n =
\left[
\begin{array}{cccc}
\frac{1}{\kappa}\left((d_n^1)^2-(1+\epsilon)\right) & \frac{1}{2(1+\epsilon)^2}d_n^1 d_n^2 & \cdots & \frac{1}{2(1+\epsilon)^2}d_n^1 d_n^N\\
\frac{1}{2(1+\epsilon)^2}d_n^2 d_n^1 &\frac{1}{\kappa}\left((d_n^2)^2-(1+\epsilon)\right) &  \cdots & \frac{1}{2(1+\epsilon)^2}d_n^2 d_n^N\\
\frac{1}{2(1+\epsilon)^2}d_n^N d_n^1 & \frac{1}{2(1+\epsilon)^2}d_n^N d_n^2 & \cdots &  \frac{1}{\kappa}\left((d_n^N)^2-(1+\epsilon)\right) \\
\end{array}
\right],\nonumber
\end{align}
where $\kappa = E (d_n^i)^4 \left(1- \dfrac{(1+\epsilon)^2}{\beta}\right)$, with $E (d_n^i)^4= \dfrac{(1+\epsilon)(1+(1+\epsilon)^3)}{(2+\epsilon)}$ denoting the fourth moment $E (d_n^i)^4$, $i=1,\ldots,N$.

\begin{remark}\textbf{(\textit{Need for asymmetry})}
The Hessian estimate that we construct is such that it disallows using symmetric Bernoulli r.v.s for perturbations. In particular, in establishing the unbiasedness of the Hessian estimate, the proof requires that the second and fourth moments of the perturbation r.v.s be different. Asymmetric Bernoulli r.v.s (with only a slight asymmetry) meet this condition and can be used in deriving a gradient estimate as well. 
\end{remark}

\subsection{Main results}
\label{sec:2rdsa-results}
Recall that $\F_n = \sigma(x_m,m\le n)$ denotes the underlying sigma-field. 
We make the following assumptions that are similar to those in \cite{spall_adaptive}:
\begin{enumerate}[label=(\textbf{C\arabic*})]
\item  The function
$f$ is four-times differentiable\footnote{Here $\nabla^4 f(x) = \dfrac{\partial^4 f (x)}{\partial x\tr \partial x\tr \partial x\tr \partial x\tr}$ denotes the fourth derivate of $f$ at $x$ and $\nabla^4_{i_1 i_2 i_3 i_4} f(x)$ denotes the $(i_1 i_2 i_3 i_4)$th entry of $\nabla^4 f(x)$, for $i_1, i_2, i_3,i_4=1,\ldots, N$.} with $\left|\nabla^4_{i_1 i_2 i_3 i_4} f(x) \right| < \infty$, for $i_1, i_2, i_3,i_4=1,\ldots, N$ and for all $x\in \R^N$. 


\item For each $n$ and all $x$, there exists a $\rho>0$ not dependent on $n$ and $x$, such that $(x-x^*)\tr \bar f_n(x) \ge \rho \left\| x_n - x\right\|$, where $\bar f_n(x) = \Upsilon(\overline H_n)^{-1} \nabla f(x)$.

\item $\{\xi_n, \xi_n^+,\xi_n^-, n=1,2,\ldots\}$ satisfy $\E\left[\left. \xi_n^+ + \xi_n^- - 2 \xi_n \right| \F_n\right] = 0$, for all $n$. 

\item  Same as (A4). 

\item Same as (A5).

\item For each $i=1,\ldots,N$ and any $\rho>0$, 
$$P(\{ \bar f_{ni} (x_n) \ge 0 \text{ i.o}\} \cap \{ \bar f_{ni} (x_n) < 0 \text{ i.o}\} \mid \{ |x_{ni} - x^*_i| \ge \rho\quad \forall n\}) =0.$$

\item The operator $\Upsilon$ satisfies $\delta_n^2 \Upsilon(H_n)^{-1} \rightarrow 0$ a.s. and  $E(\left\| \Upsilon(H_n)^{-1}\right\|^{2+\zeta}) \le \rho$ for some $\zeta, \rho>0$.

\item For any $\tau >0$ and nonempty $S \subseteq \{1,\ldots,N\}$, there exists a $\rho'(\tau,S)>\tau$ such that 
$$ \limsup_{n\rightarrow \infty} \left| \dfrac{\sum_{i \notin S} (x-x^*)_i \bar f_{ni}(x)}{\sum_{i \in S} (x-x^*)_i \bar f_{ni}(x)}               \right| < 1 \text{ a.s.}$$
for all $|(x-x^*)_i| < \tau$ when $i \notin S$ and $|(x-x^*)_i| \ge \rho'(\tau,S)$ when $i\in S$.
\item For some $\alpha_0, \alpha_1>0$ and for all $n$, $\E {\xi_n}^{2} \le \alpha_0$, $\E {\xi_n^{\pm}}^{2} \le \alpha_0$, $\E f(x_n)^{2} \le \alpha_1$ and $\E f(x_n\pm \delta_n d_n)^{2} \le \alpha_1$. 
\item  $\sum_n \frac{1}{(n+1)^{2}\delta_n^{2}} < \infty$.
\end{enumerate}
For a detailed interpretation of the above conditions, the reader is referred to Section III and Appendix B of \cite{spall_adaptive}. In particular, 
(C1) holds if the objective $f$ is twice continuously differentiable with a bounded second derivative and (C2) ensures the objective $f$ has enough curvature.
(C3)-(C5) are standard requirements on noise and step-sizes and can be motivated in a similar manner as in the case of 1RDSA (see Section \ref{sec:1rdsa-results}).  (C6) and (C8) are not necessary if the iterates are bounded, i.e., $\sup_n \l x_n \r < \infty$ a.s. (C7) can be ensured by having $\Upsilon$ defined as mentioned earlier, i.e., $\Upsilon(A)$ performs an eigen-decomposition of $A$ followed by projecting the eigenvalues to the positive side by adding a large enough scalar. Finally, (C9) and (C10) are necessary to ensure convergence of the Hessian recursion, in particular, to invoke a martingale convergence result (see Theorem \ref{thm:2rdsa-H} and its proof below).

\begin{lemma}(\textbf{Bias in Hessian estimate})
\label{lemma:2rdsa-bias}
Under (C1)-(C10), with $\widehat H_n$ defined according to either \eqref{eq:2rdsa-estimate-unif} or \eqref{eq:2rdsa-estimate-ber}, we have a.s. that\footnote{Here $\widehat H_n(i,j)$ and $\nabla^2_{ij}f(\cdot)$ denote the $(i,j)$th entry in the Hessian estimate $\widehat H_n$ and the true Hessian $\nabla^2 f(\cdot)$, respectively.}, for $i,j = 1,\ldots,N$,
\begin{align}
\left|\E\left[
\left. \widehat H_n(i,j) \right| \F_n \right] - \nabla^2_{ij} f(x_n)\right| = O(\delta_n^2).
\end{align} 
\end{lemma}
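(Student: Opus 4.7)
My plan is to mirror the proof of Lemma \ref{lemma:1rdsa-bias} but with a symmetric second difference in place of the first difference. I would first write
\begin{align*}
\widehat H_n = M_n \cdot \frac{f(x_n+\delta_n d_n)+f(x_n-\delta_n d_n)-2f(x_n)}{\delta_n^2} + M_n \cdot \frac{\xi_n^+ + \xi_n^- - 2\xi_n}{\delta_n^2},
\end{align*}
absorbing the overall scalar $9/(2\eta^4)$ into $M_n$ in the uniform case. By the independence of $d_n$ from the noise and $\F_n$ (assumption (C4)) together with the centered-noise condition in (C3), a double-conditioning argument $\E[M_n (\xi_n^+ + \xi_n^- - 2\xi_n)/\delta_n^2 \mid \F_n] = \E[M_n\, \E((\xi_n^+ + \xi_n^- - 2\xi_n)/\delta_n^2 \mid d_n, \F_n)\mid \F_n] = 0$ kills the noise piece.

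Next, a fourth-order Taylor expansion of $f$ about $x_n$ yields
\begin{align*}
\frac{f(x_n+\delta_n d_n)+f(x_n-\delta_n d_n)-2f(x_n)}{\delta_n^2} = d_n\tr \nabla^2 f(x_n)\, d_n + O(\delta_n^2),
\end{align*}
where the odd-order terms cancel between the two function evaluations and the $O(\delta_n^2)$ remainder is controlled by the uniform bound on $\nabla^4 f$ from (C1) together with the coordinatewise boundedness of $d_n$ (by $\eta$ in the uniform case, by $1+\epsilon$ in the asymmetric Bernoulli case). That same boundedness ensures the remainder remains $O(\delta_n^2)$ entrywise after multiplication by $M_n$.

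The heart of the proof is then the moment identity $\E[M_n\, d_n\tr \nabla^2 f(x_n)\, d_n \mid \F_n] = \nabla^2 f(x_n)$. Expanding the quadratic form as $\sum_k (d_n^k)^2 \nabla^2_{kk} f(x_n) + 2\sum_{k<l} d_n^k d_n^l \nabla^2_{kl} f(x_n)$, I would analyze the $(i,j)$ entry of $M_n$ times this quadratic form case by case. On the diagonal ($i=j$), the off-diagonal $d_n^k d_n^l$ terms of the quadratic form vanish in expectation against $(d_n^i)^2 - \E(d_n^i)^2$ by coordinatewise independence and mean-zero of $d_n$; among the diagonal terms, only $k=i$ survives (since $\E[(d_n^i)^2 - \E(d_n^i)^2] = 0$), leaving a coefficient proportional to $\E(d_n^i)^4 - (\E(d_n^i)^2)^2$. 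Off the diagonal ($i\neq j$), only the $k=i$, $l=j$ cross-product survives, with coefficient proportional to $\E(d_n^i)^2 \E(d_n^j)^2$. The scalar prefactors baked into $M_n$ are engineered to normalize both coefficients to $1$: for uniform perturbations, $\frac{9}{2\eta^4}\cdot\frac{5}{2}\cdot(\eta^4/5 - \eta^4/9) = 1$ on the diagonal and $\frac{9}{2\eta^4}\cdot 2\cdot(\eta^2/3)^2 = 1$ off the diagonal; for asymmetric Bernoulli, $\kappa$ is chosen so that $(\E(d_n^i)^4 - (1+\epsilon)^2)/\kappa = 1$, while the factor $1/(2(1+\epsilon)^2)$ handles the off-diagonal against $\E(d_n^i)^2 \E(d_n^j)^2 = (1+\epsilon)^2$.

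The main obstacle is the combinatorial moment bookkeeping in the diagonal case, and it is also where the \emph{asymmetry} of the Bernoulli perturbations is essential: a symmetric $\pm 1$ Bernoulli would have $\E(d_n^i)^4 = (\E(d_n^i)^2)^2 = 1$, making the diagonal normalizer $\kappa$ vanish and the Hessian construction ill-defined. The explicit asymmetry $\{-1, 1+\epsilon\}$ with $\epsilon > 0$ is precisely what keeps $\E(d_n^i)^4 - (\E(d_n^i)^2)^2$ bounded away from zero, and this is the algebraic reason behind the ``need for asymmetry'' remark preceding the lemma.
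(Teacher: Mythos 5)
Your proposal is correct and follows essentially the same route as the paper's proof: a fourth-order Taylor expansion with cancellation of odd terms, elimination of the noise via (C3)--(C4), and the moment computation showing $\E[M_n\, d_n\tr \nabla^2 f(x_n) d_n \mid \F_n] = \nabla^2 f(x_n)$, with the same normalization checks for both perturbation choices. Your observation that the diagonal coefficient reduces to $\E(d_n^i)^4 - (\E(d_n^i)^2)^2$ (so that symmetric Bernoulli perturbations would make $\kappa$ vanish) is exactly the content of the paper's ``need for asymmetry'' remark, merely stated more compactly than the paper's term-by-term expansion.
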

From the above lemma, it is evident that the bias in the Hessian estimate above is of the same order as 2SPSA of \cite{spall_adaptive}. 
\begin{proof}\textbf{\textit{(Lemma \ref{lemma:2rdsa-bias})}}\ \\
\textbf{Case 1: Uniform perturbations}

By a Taylor's series expansion, we obtain
\begin{align*}
f(x_n \pm \delta_n d_n) = &f(x_n) \pm \delta_n d_n\tr \nabla f(x_n) + \frac{\delta_n^2}{2} d_n\tr \nabla^2 f(x_n) d_n \pm \frac{\delta_n^3}{6} \nabla^3 f(x_n) (d_n \otimes d_n \otimes d_n)\\
&  +  \frac{\delta_n^4}{24} \nabla^4 f(\tilde  x_n^+)(d_n \otimes d_n \otimes d_n \otimes d_n).
\end{align*}
The fourth-order term in each of the expansions above can be shown to be of order $O(\delta_n^4)$ using (C1) and arguments similar to that in Lemma \ref{lemma:1rdsa-bias} (see \eqref{eq:l2} there). Hence,
\begin{align*}
\dfrac{f(x_n+\delta_n d_n) + f(x_n-\delta_n d_n) - 2 f(x_n)}{\delta_n^2} =& d_n\tr \nabla^2 f(x_n) d_n +  O(\delta_n^2)\\
= & \sum\limits_{i=1}^N\sum\limits_{j=1}^N d_n^i d_n^j \nabla^2_{ij} f(x_n) + O(\delta_n^2)\\
=& \sum\limits_{i=1}^N (d_n^i)^2 \nabla^2_{ii} f(x_n) + 2\sum\limits_{i=1}^{N-1}\sum\limits_{j=i+1}^N d_n^i d_n^j \nabla^2_{ij} f(x_n) + O(\delta_n^2).
\end{align*}
Now, taking the conditional expectation of the Hessian estimate $\widehat{H_n}$ and observing that $\E[\xi_n^+ + \xi_n^- - 2\xi_n \mid \F_n] = 0$ by (C3), we obtain the following:
\begin{align}
\E[\widehat H_n \mid \F_n] =   \E\left[\left. M_n \left(\sum\limits_{i=1}^{N-1} (d_n^i)^2 \nabla^2_{ii} f(x_n) + 2\sum\limits_{i=1}^N\sum\limits_{j=i+1}^N d_n^i d_n^j \nabla^2_{ij} f(x_n) + O(\delta_n^2)\right)\right| \F_n\right]. \label{eq:h1}
\end{align}

Note that the $O(\delta_n^2)$ term inside the conditional expectation above remains $O(\delta_n^2)$ even after the multiplication with $M_n$.
We analyse the diagonal and off-diagonal terms in the multiplication of the matrix $M_n$ with the scalar above, ignoring the $O(\delta_n^2)$ term. 

\subsection*{Diagonal terms in \eqref{eq:h1}:}

Consider the $lth$ diagonal term inside the conditional expectation in \eqref{eq:h1}:
\begin{align}
& \dfrac{45}{4\eta^4}  \left((d_n^l)^2-\frac{\eta^2}{3}\right) \left(\sum\limits_{i=1}^N (d_n^i)^2 \nabla^2_{ii} f(x_n) + 2\sum\limits_{i=1}^{N-1}\sum\limits_{j=i+1}^N d_n^i d_n^j \nabla^2_{ij} f(x_n)\right)\nonumber\\
=& \dfrac{45}{4\eta^4} (d_n^l)^2 \sum\limits_{i=1}^N (d_n^i)^2 \nabla^2_{ii} f(x_n) + \dfrac{45}{2\eta^4} (d_n^l)^2\sum\limits_{i=1}^{N-1}\sum\limits_{j=i+1}^N d_n^i d_n^j \nabla^2_{ij} f(x_n)\nonumber\\
& - \dfrac{15}{4\eta^2} \sum\limits_{i=1}^N (d_n^i)^2 \nabla^2_{ii} f(x_n) - \dfrac{15}{2\eta^2} \sum\limits_{i=1}^{N-1}\sum\limits_{j=i+1}^N d_n^i d_n^j \nabla^2_{ij} f(x_n).\label{eq:h2}
\end{align}
From the distributions of $d_n^i,d_n^j$ and the fact that $d_n^i$ is independent of $d_n^j$ for $i<j$, it is easy to see that $\E\left(\left. (d_n^l)^2\sum\limits_{i=1}^{N-1}\sum\limits_{j=i+1}^N d_n^i d_n^j \nabla^2_{ij} f(x_n) \right| \F_n\right) = 0$ and $\E\left(\left.\sum\limits_{i=1}^{N-1}\sum\limits_{j=i+1}^N d_n^i d_n^j \nabla^2_{ij} f(x_n) \right| \F_n\right) = 0$. Thus, the conditional expectations of the second and fourth terms on the RHS of \eqref{eq:h2} are both zero. 

The first term on the RHS of \eqref{eq:h2} with the conditional expectation can be simplified as follows:
\begin{align*}
\dfrac{45}{4\eta^4} \E\left(\left. (d_n^l)^2 \sum\limits_{i=1}^N (d_n^i)^2 \nabla^2_{ii} f(x_n) \right| \F_n\right)
= & \dfrac{45}{4\eta^4} \E\left((d_n^l)^4 \nabla^2_{ll} f(x_n) + \sum\limits_{i=1,i\ne l}^N (d_n^l)^2(d_n^i)^2 \nabla^2_{ii} f(x_n) \right)\\
= & \dfrac{45}{4\eta^4} \left( \dfrac{\eta^4}{5} \nabla^2_{ll} f(x_n) + \dfrac{\eta^4}{9}\sum\limits_{i=1,i\ne l}^N  \nabla^2_{ii} f(x_n) \right), \text{ a.s.}
\end{align*} 
For the second equality above, we have used the fact that $\E[(d_n^l)^4] =  \frac{\eta^4}{5}$ and $\E[(d_n^l)^2 (d_n^i)^2] = \E[(d_n^l)^2] \E[(d_n^i)^2] = \dfrac{\eta^4}{9}$, $\forall l \ne i$.

The third term in \eqref{eq:h2} with the conditional expectation and without the negative sign can be simplified as follows: 
\begin{align*}
\dfrac{15}{4\eta^2} \E\left(\left. \sum\limits_{i=1}^N (d_n^i)^2 \nabla^2_{ii} f(x_n) \right| \F_n\right)
= & \dfrac{15}{4\eta^2} \sum\limits_{i=1}^N \E \left[(d_n^i)^2\right] \nabla^2_{ii} f(x_n) \\
= & \dfrac{5}{4} \sum\limits_{i=1}^N \nabla^2_{ii} f(x_n), \text{ a.s.}
\end{align*} 
Combining the above followed by some algebra, we obtain 
\begin{align*}
\dfrac{45}{4\eta^4}  \E\left[\left. \left((d_n^l)^2-\frac{\eta^2}{3}\right) \left(\sum\limits_{i=1}^N (d_n^i)^2 \nabla^2_{ii} f(x_n) + 2\sum\limits_{i=1}^{N-1}\sum\limits_{j=i+1}^N d_n^i d_n^j \nabla^2_{ij} f(x_n)\right)\right| \F_n\right] = \nabla^2_{ll} f(x_n), \text{ a.s.}
\end{align*}

\subsection*{Off-diagonal terms in \eqref{eq:h1}:}

We now consider the $(k,l)$th term in \eqref{eq:h1}: Assume w.l.o.g that $k<l$. Then,
\begin{align}
& \dfrac{9}{2\eta^4} \E\left[\left.d_n^k d_n^l   \left(\sum\limits_{i=1}^N (d_n^i)^2 \nabla^2_{ii} f(x_n) + 2\sum\limits_{i=1}^{N-1}\sum\limits_{j=i+1}^N d_n^i d_n^j \nabla^2_{ij} f(x_n)\right)\right| \F_n \right]\nonumber\\
=& \dfrac{9}{2\eta^4} \sum\limits_{i=1}^N \E \left(d_n^k d_n^l (d_n^i)^2 \right)\nabla^2_{ii} f(x_n) + \dfrac{9}{\eta^4}\sum\limits_{i=1}^{N-1}\sum\limits_{j=i+1}^N \E\left(d_n^k d_n^l d_n^i d_n^j\right) \nabla^2_{ij} f(x_n) \label{eq:crossh}\\
= & \nabla^2_{kl} f(x_n).\nonumber
\end{align}
The last equality follows from the fact that the first term in \eqref{eq:crossh} is $0$ since $k\ne l$, while the second term in \eqref{eq:crossh} can be seen to be equal to $\dfrac{9}{\eta^4} \E\left((d_n^k)^2 (d_n^l)^2\right) \nabla^2_{kl} f(x_n) = \nabla^2_{kl} f(x_n)$.
The claim follows for the case of uniform perturbations.\ \\\ \\

\textbf{Case 2: Asymmetric Bernoulli perturbations}

Note that the proof up to \eqref{eq:h1} is independent of the choice of perturbations. The proof differs in the analysis of the diagonal and off-diagonal terms in \eqref{eq:h1}. In the case of asymmetric Bernoulli perturbations, the normalizing scalars in the definition of $M_n$ in \eqref{eq:2rdsa-estimate-ber} are different. 

\subsection*{Diagonal terms in \eqref{eq:h1}}
Let $\phi = \dfrac{(1+\epsilon)(1+(1+\epsilon)^3)}{(2+\epsilon)}$ denote the fourth moment $E (d_n^i)^4$, for any $i=1,\ldots,N$.
An analogue of \eqref{eq:h2} is as follows:
\begin{align}
& \frac{1}{\phi(1- \frac{(1+\epsilon)^2}{\phi})}  \E\left(\left. \left((d_n^l)^2-(1+\epsilon)\right) \left(\sum\limits_{i=1}^N (d_n^i)^2 \nabla^2_{ii} f(x_n) + 2\sum\limits_{i=1}^{N-1}\sum\limits_{j=i+1}^N d_n^i d_n^j \nabla^2_{ij} f(x_n)\right)\right| \F_n\right)\nonumber\\
=& \frac{1}{\phi(1- \frac{(1+\epsilon)^2}{\phi})} \E\left(\left. (d_n^l)^2 \sum\limits_{i=1}^N (d_n^i)^2 \nabla^2_{ii} f(x_n) \right| \F_n\right)  - \frac{(1+\epsilon)}{\phi(1- \frac{(1+\epsilon)^2}{\phi})} \E\left(\left. \sum\limits_{i=1}^N (d_n^i)^2 \nabla^2_{ii} f(x_n) \right| \F_n\right) \label{eq:h2-a}
\end{align}
We have used the fact that the second term in the LHS above is conditionally zero  (see argument below \eqref{eq:h2} for a justification).
The first term on the RHS of \eqref{eq:h2-a} be simplified as follows:
\begin{align}
&\frac{1}{\phi(1- \frac{(1+\epsilon)^2}{\phi})} \E\left(\left. (d_n^l)^2 \sum\limits_{i=1}^N (d_n^i)^2 \nabla^2_{ii} f(x_n) \right| \F_n\right)\nonumber\\
= & \frac{1}{\phi(1- \frac{(1+\epsilon)^2}{\phi})} \E\left((d_n^l)^4 \nabla^2_{ll} f(x_n) + \sum\limits_{i=1,i\ne l}^N (d_n^l)^2(d_n^i)^2 \nabla^2_{ii} f(x_n) \right)\nonumber\\
= & \frac{1}{(1- \frac{(1+\epsilon)^2}{\phi})} \left( \nabla^2_{ll} f(x_n) + \dfrac{(1+\epsilon)^2}{\phi}\sum\limits_{i=1,i\ne l}^N  \nabla^2_{ii} f(x_n) \right).\label{eq:diag1}
\end{align} 
For the second equality above, we have used the fact that $\E[(d_n^l)^4] = \phi$ and $\E[(d_n^l)^2 (d_n^i)^2] = \E[(d_n^l)^2] \E[(d_n^i)^2] = (1+\epsilon)^2$, $\forall l \ne i$.

The second term in \eqref{eq:h2-a} with the conditional expectation and without the negative sign can be simplified as follows:
\begin{align}
\frac{(1+\epsilon)}{\phi(1- \frac{(1+\epsilon)^2}{\phi})} \E\left(\left. \sum\limits_{i=1}^N (d_n^i)^2 \nabla^2_{ii} f(x_n) \right| \F_n\right)
= & \frac{(1+\epsilon)}{\phi(1- \frac{(1+\epsilon)^2}{\phi})} \sum\limits_{i=1}^N \E \left[(d_n^i)^2\right] \nabla^2_{ii} f(x_n) \nonumber\\
= & \frac{(1+\epsilon)^2}{\phi(1- \frac{(1+\epsilon)^2}{\phi})} \sum\limits_{i=1}^N \nabla^2_{ii} f(x_n).\label{eq:diag2}
\end{align} 
Combining \eqref{eq:diag1} and \eqref{eq:diag2}, the correctness of the Hessian estimate follows for the diagonal terms.

\subsection*{Off-diagonal terms in \eqref{eq:h1}}
Consider the $(k,l)$th term in \eqref{eq:h1}, with $k<l$. We obtain
\begin{align}
& \dfrac{1}{2(1+\epsilon)^2} \E\left[\left.d_n^k d_n^l   \left(\sum\limits_{i=1}^N (d_n^i)^2 \nabla^2_{ii} f(x_n) + 2\sum\limits_{i=1}^{N-1}\sum\limits_{j=i+1}^N d_n^i d_n^j \nabla^2_{ij} f(x_n)\right)\right| \F_n \right]\nonumber\\
=& \dfrac{1}{2(1+\epsilon)^2} \sum\limits_{i=1}^N \E \left(d_n^k d_n^l (d_n^i)^2 \right)\nabla^2_{ii} f(x_n) + \dfrac{1}{(1+\epsilon)^2}\sum\limits_{i=1}^{N-1}\sum\limits_{j=i+1}^N \E\left(d_n^k d_n^l d_n^i d_n^j\right) \nabla^2_{ij} f(x_n) \label{eq:crossh-a}\\
= & \nabla^2_{kl} f(x_n).\nonumber
\end{align}
Note that the first term on the RHS of \eqref{eq:crossh-a} equals zero since $k\ne l$.
The claim follows for the case of asymmetric Bernoulli perturbations.
\end{proof}

\begin{theorem}(\textbf{Strong Convergence of parameter})
\label{thm:2rdsa-x}
Assume (C1)-(C8). Then $x_n \rightarrow x^*$ a.s. as $n\rightarrow\infty$, where $x_n$ is given by \eqref{eq:2rdsa}. 
\end{theorem}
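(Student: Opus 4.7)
The plan is to cast the 2RDSA recursion \eqref{eq:2rdsa} as a Robbins-Monro-type noisy descent iteration and invoke a Kushner-Clark style ODE convergence theorem, mimicking Proposition 1 of \cite{spall_adaptive}. Concretely, I would write the update as
\begin{align*}
x_{n+1} = x_n - a_n \bar f_n(x_n) + a_n b_n + a_n \epsilon_n,
\end{align*}
where $\bar f_n(x) = \Upsilon(\overline H_n)^{-1}\nabla f(x)$ is the scaled ``true'' gradient, the bias term is $b_n = \Upsilon(\overline H_n)^{-1}\bigl(\nabla f(x_n) - \E[\widehat\nabla f(x_n)\mid \F_n]\bigr)$, and the stochastic term is $\epsilon_n = \Upsilon(\overline H_n)^{-1}\bigl(\E[\widehat\nabla f(x_n)\mid \F_n] - \widehat\nabla f(x_n)\bigr)$, which is a martingale difference sequence with respect to $\F_n$ by (C3)-(C4).

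Next I would dispose of $a_n b_n$ and $a_n \epsilon_n$. For the bias, Lemma \ref{lemma:1rdsa-bias} (applied with the 2RDSA gradient estimate, which coincides with the 1RDSA estimate) together with (C7) gives $\|b_n\| \le \|\Upsilon(\overline H_n)^{-1}\| \cdot O(\delta_n^2)$, and the moment bound in (C7) combined with (C5) yields $a_n b_n \to 0$ a.s. For the martingale part, I would bound $\E[\|\epsilon_n\|^2 \mid \F_n]$ using the noise moment bound (C9) and $\|\Upsilon(\overline H_n)^{-1}\|^{2+\zeta}$ moment bound in (C7), and then show $\sum_n a_n \epsilon_n$ converges a.s.\ by the martingale convergence theorem, exploiting $\sum_n (a_n/\delta_n)^2 < \infty$ from (C5) to control the $1/\delta_n$ blowup inherent in $\widehat\nabla f(x_n)$.

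Having reduced the recursion to a perturbed gradient flow, I would apply Theorem 2.3.1 of \cite{kushcla} to the ODE $\dot x_t = -\bar f_t(x_t)$ in the limiting regime. The key ingredients to verify are those of \cite{kushcla}: the noise and bias terms are asymptotically negligible (done above), and the ``drift'' $\bar f_n$ is a descent direction for the Lyapunov function $\|x - x^*\|^2$. This is exactly where (C2) is used, since $(x-x^*)^\top \bar f_n(x) \ge \rho \|x - x^*\|$ uniformly in $n$, guaranteeing strict descent regardless of the random path of $\overline H_n$. The conditions (C6) and (C8), which rule out component-wise sign oscillations of $\bar f_n$, are needed to exclude pathological limit points and to handle the dimension-by-dimension argument.

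The main obstacle is that, at this stage of the analysis, we have no handle yet on the limit of $\overline H_n$ (that is the content of the subsequent Theorem \ref{thm:2rdsa-H}), so we cannot reduce $\Upsilon(\overline H_n)^{-1}$ to a deterministic positive-definite matrix. The workaround, following \cite{spall_adaptive}, is precisely to impose (C2), (C6), (C7), (C8) as \emph{uniform-in-$n$} structural assumptions on the random sequence $\Upsilon(\overline H_n)^{-1}$: (C7) keeps the scaling matrix from degenerating, (C2) maintains the descent property, and (C6)-(C8) prevent oscillatory non-convergence in each coordinate. With these, the Kushner-Clark apparatus applies and yields $x_n \to x^*$ a.s.
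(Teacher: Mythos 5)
Your proposal follows essentially the same route as the paper: the paper's proof consists precisely of noting that Lemma \ref{lemma:1rdsa-bias} gives $\E[\widehat\nabla f(x_n)\mid\F_n]=\nabla f(x_n)+\beta_n$ with $\delta_n^{-2}\|\beta_n\|$ uniformly bounded, and then deferring the remaining decomposition, martingale-noise control, and descent/non-oscillation argument via (C2), (C6)--(C8) to the proof of Theorem 1a in \cite{spall_adaptive}, which is exactly the machinery you spell out. Your write-up is a correct expansion of that deferred argument (the only cosmetic slip is leaning on (C9) for the noise second moments, which the theorem statement does not formally list among its hypotheses, though the paper is equally loose on this point).
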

\begin{proof}
From Lemma \ref{lemma:1rdsa-bias}, we observe that the gradient estimate $\widehat\nabla f(x_n)$ in \eqref{eq:2rdsa} satisfies:
$$ \E\left[\left.\widehat\nabla f(x_n)\right| \F_n \right]= \nabla f(x_n) + \beta_n,$$
where the bias term $\beta_n$ is such that $\delta_n^{-2} \left\|\beta_n\right\|$ is uniformly bounded for sufficiently large $n$. 
The rest of the proof follows in a manner similar to the proof of Theorem 1a in \cite{spall_adaptive}\footnote{Note that the proof of Theorem \ref{thm:2rdsa-x} does not assume any particular form of the Hessian estimate and only requires assumptions (C1)-(C7), which are similar to those in \cite{spall_adaptive}. The only variation in our case, in comparison to \cite{spall_adaptive}, is the gradient estimation uses an RDSA scheme while \cite{spall_adaptive} uses first-order SPSA.  Thus, only the first step of the proof differs and in our case, Lemma \ref{lemma:1rdsa-bias} controls the bias with the same order as that of SPSA, leading to the final result.}. 
\end{proof}

\begin{theorem}(\textbf{Strong Convergence of Hessian})
\label{thm:2rdsa-H}
Assume (C1)-(C10). Then $\overline H_n \rightarrow \nabla^2 f(x^*)$ a.s. as $n\rightarrow \infty$, where $\overline H_n$ is governed by \eqref{eq:2rdsa-H} and $\widehat H_n$ defined according to either \eqref{eq:2rdsa-estimate-unif} or \eqref{eq:2rdsa-estimate-ber}. 
\end{theorem}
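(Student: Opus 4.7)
The plan is to unfold the recursion so that $\overline H_n$ becomes a sample average of the per-step Hessian estimates, and then analyze bias and noise separately. Multiplying \eqref{eq:2rdsa-H} by $(n+1)$ and telescoping gives
\begin{align*}
\overline H_n \;=\; \frac{1}{n+1}\overline H_0 \;+\; \frac{1}{n+1}\sum_{k=1}^{n} \widehat H_k.
\end{align*}
The first summand vanishes, so I would decompose the second as
\begin{align*}
\frac{1}{n+1}\sum_{k=1}^{n} \widehat H_k \;=\; \underbrace{\frac{1}{n+1}\sum_{k=1}^{n} \E[\widehat H_k \mid \F_k]}_{\text{(I)}} \;+\; \underbrace{\frac{1}{n+1}\sum_{k=1}^{n} D_k}_{\text{(II)}},\qquad D_k := \widehat H_k - \E[\widehat H_k \mid \F_k],
\end{align*}
where $\{D_k\}$ is a matrix-valued martingale difference sequence with respect to $\{\F_k\}$.

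For term (I), I would invoke Lemma \ref{lemma:2rdsa-bias}, which gives $\E[\widehat H_k \mid \F_k] = \nabla^2 f(x_k) + O(\delta_k^2)$ a.s. Theorem \ref{thm:2rdsa-x} gives $x_k \to x^*$ a.s., and (C1) together with (C5) ensures that $\nabla^2 f$ is continuous and $\delta_k \to 0$. Hence $\E[\widehat H_k \mid \F_k] \to \nabla^2 f(x^*)$ a.s., and Cesàro averaging immediately yields (I) $\to \nabla^2 f(x^*)$ a.s.

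For term (II), the key step is a second-moment bound $\E\|\widehat H_k\|^2 = O(1/\delta_k^2)$. Inspecting \eqref{eq:2rdsa-estimate-unif}/\eqref{eq:2rdsa-estimate-ber}, each entry of $\widehat H_k$ is a uniformly bounded function of $d_k$ (either uniform on $[-\eta,\eta]$ or asymmetric Bernoulli, both with bounded moments of every order) multiplied by $(y_k^+ + y_k^- - 2y_k)/\delta_k^2$. By (C9) the latter has $L^2$-norm $O(1/\delta_k^2)$, so $\E\|D_k\|^2 \le \E\|\widehat H_k\|^2 = O(1/\delta_k^2)$. Then
\begin{align*}
\sum_{k\ge 1} \frac{\E\|D_k\|^2}{(k+1)^2} \;\le\; C\sum_{k\ge 1}\frac{1}{(k+1)^2 \delta_k^2} \;<\; \infty
\end{align*}
by (C10). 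The martingale convergence theorem (applied entrywise) gives that $\sum_{k=1}^n D_k/(k+1)$ converges a.s., and Kronecker's lemma then delivers $\frac{1}{n+1}\sum_{k=1}^n D_k \to 0$ a.s., i.e., (II) $\to 0$. Combining (I) and (II) proves $\overline H_n \to \nabla^2 f(x^*)$ a.s.

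The main obstacle I expect is the clean second-moment bound for $\widehat H_k$: the entries of $M_k$ contain products such as $d_k^i d_k^j$ and $(d_k^i)^2 - \E(d_k^i)^2$, and one must carefully combine their boundedness with (C9)'s $L^2$ control on the noise and on the objective values to conclude that the entire expression scales as $1/\delta_k^2$ and not worse. Once that bound is in hand, (C10) was designed precisely to make the martingale series summable, and the rest is a standard Cesàro/Kronecker argument.
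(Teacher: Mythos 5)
Your proposal follows essentially the same route as the paper's proof: unfold \eqref{eq:2rdsa-H} into a Cesàro average, split it into the conditional means (handled via Lemma \ref{lemma:2rdsa-bias}, Theorem \ref{thm:2rdsa-x}, continuity of $\nabla^2 f$ and $\delta_k \to 0$) plus a martingale-difference average that vanishes by a martingale convergence argument using (C9)--(C10). The one spot to tighten is the second-moment bookkeeping for $\widehat H_k$ (an $L^2$-norm of order $1/\delta_k^2$ yields a second moment of order $1/\delta_k^4$, not $1/\delta_k^2$), but the paper's own proof invokes the same moment and summability conditions at the same level of detail, so this is not a divergence in approach.
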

\begin{proof}
We first use a martingale convergence result to show that \\
$
 \frac{1}{n+1} \sum_{m=0}^n \left( \widehat H_m - \E\left[\left. \widehat H_m \right| x_m\right]\right) \rightarrow 0 \text{ a.s.}
$ Next, using Lemma \ref{lemma:2rdsa-bias}, we can conclude that \\$
\frac{1}{n+1} \sum_{m=0}^n \E\left[\left. \widehat H_m \right| x_m\right]
\rightarrow  \nabla^2 f(x^*) \text{ a.s.}
$
and the claim follows. The reader is referred to Appendix \ref{sec:appendix-2rdsa} for the detailed proof.
\end{proof}

We next present a asymptotic normality result for 2RDSA under the following additional assumptions:
\begin{enumerate}[label=(\textbf{C\arabic*}),resume]
\item  For some $\zeta, \alpha_0, \alpha_1>0$ and for all $n$, $\E {\xi_n}^{2+\zeta} \le \alpha_0$, $\E {\xi_n^{\pm}}^{2+\zeta} \le \alpha_0$, $\E f(x_n)^{2+\zeta} \le \alpha_1$ and $\E f(x_n\pm \delta_n d_n)^{2+\zeta} \le \alpha_1$. 
\item The operator $\Upsilon$ is chosen such that $\left\|\Upsilon(\overline H_n) - \overline H_n\right\| \rightarrow 0$ a.s. as $n\rightarrow \infty$.
\end{enumerate}
Assumption (C11) is required to ignore the effects of noise, while (C12) together with Theorem \ref{thm:2rdsa-H} ensures that $\Upsilon(\overline H_n)$ converges to the true Hessian a.s. It is easy to see that the choice suggested in Remark \ref{remark:upsilon} for $\Upsilon$ satisfies (C12).
 
The main result is as follows:
\begin{theorem}(\textbf{Asymptotic Normality})
\label{thm:2rdsa-asymp-normal}
Assume (C1)-(C12) and  that $\nabla^2 f(x^*)^{-1}$ exists.
Let $a_n=a_0/n^\alpha$ and $\delta_n = \delta_0/n^\gamma$, where $a_0,\delta_0 >0$,  $\alpha \in (0,1]$ and $\gamma \ge 1/6$. Let $\beta = \alpha - 2 \gamma$. Let $\E(\xi_n^+ - \xi_n^-)^2 \rightarrow \sigma^2$ as $n\rightarrow \infty$. 
Then, we have
\begin{align}
 n^{\beta/2}(x_n - x^*) \xrightarrow{dist} \N(\mu, \Omega)  \text{ as } n\rightarrow \infty,
\end{align}
where $\N(\mu, \Omega)$ is the multivariate Gaussian distribution with mean $\mu$ and covariance matrix $\Omega$. 
The mean $\mu$ is defined as follows:
$\mu=0$ (an $N$-vector of all zeros) if $\gamma > \alpha/6$ and $\mu = k_\mu (a_0 \delta_0^2 (2a_0-\beta^+)^{-1}\nabla^2 f(x^*)^{-1} T)$ if $\gamma = \alpha/6$, where
\begin{equation*}
\label{eq:mu-2}
 k_\mu =
  \begin{cases}
3.6 &  \text{ for uniform perturbations,}\\
\dfrac{2(1+\epsilon)(1+(1+\epsilon)^3)}{(2+\epsilon)(1+\epsilon)^2}  &  \text{ for asymmetric Bernoulli perturbations.}
  \end{cases}
\end{equation*}
In the above, $T$ and $\beta^+$ are as defined in Theorem \ref{thm:1rdsa-asymp-normal}.
The covariance matrix $\Omega$ is defined as follows:
$$ \Omega= \dfrac{a_0^2\sigma^2}{4\delta_0^2 \rho^2 (8a_0-4\beta^+)} \left(\nabla^2 f(x^*)^{-1}\right)^2.$$
\end{theorem}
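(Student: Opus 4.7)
The plan is to reduce the claim to a standard Fabian-type asymptotic normality result for stochastic approximation, specifically Proposition~1 of \cite{chin1997comparative} as in the proof of Theorem~\ref{thm:1rdsa-asymp-normal}, after absorbing the adaptive Hessian pre-multiplier $\Upsilon(\overline H_n)^{-1}$ into an ``effective'' gain that is asymptotically deterministic. First, letting $\Delta_n = x_n - x^*$, I would decompose
$$\widehat\nabla f(x_n) = \nabla f(x_n) + b_n + e_n,$$
where $b_n = \E[\widehat\nabla f(x_n)\mid \F_n] - \nabla f(x_n)$ is the deterministic bias and $e_n = \widehat\nabla f(x_n) - \E[\widehat\nabla f(x_n)\mid\F_n]$ is the martingale-difference noise. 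By Lemma~\ref{lemma:1rdsa-bias}, $\|b_n\| = O(\delta_n^2)$, and a more careful bookkeeping of the third-order Taylor remainder (exactly as in the proof of Lemma~\ref{lemma:1rdsa-bias}) yields the leading-order form $b_n = -\delta_n^2 (k_\mu/2) T + o(\delta_n^2)$ with $k_\mu$ and $T$ as in the statement. By (C11), the noise $e_n$ has bounded $(2+\zeta)$-moments, and its conditional covariance is $(1/\delta_n^2)\cdot\sigma^2/4$ times the identity, by the perturbation-normalization identities listed at the end of the proof of Theorem~\ref{thm:1rdsa-asymp-normal}.

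Second, I would Taylor-expand $\nabla f(x_n) = \nabla^2 f(x^*)\Delta_n + O(\|\Delta_n\|^2)$ (using that $\nabla f(x^*)=0$ and (C1)) and plug into the 2RDSA recursion to obtain
$$\Delta_{n+1} = \Delta_n - a_n\,\Upsilon(\overline H_n)^{-1}\bigl[\nabla^2 f(x^*)\Delta_n + b_n + e_n + O(\|\Delta_n\|^2)\bigr].$$
Theorems~\ref{thm:2rdsa-x} and \ref{thm:2rdsa-H} together with (C12) and the invertibility of $\nabla^2 f(x^*)$ give $\Upsilon(\overline H_n)^{-1}\nabla^2 f(x^*) = I + o(1)$ a.s., so the recursion can be rewritten as
$$\Delta_{n+1} = (I - a_n I)\Delta_n - a_n\,\nabla^2 f(x^*)^{-1}(b_n + e_n) + a_n R_n,$$
where $R_n = o(\|\Delta_n\|) + o(\delta_n^2) + O(\|\Delta_n\|^2)$ collects all lower-order terms.

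Third, I would apply Proposition~1 of \cite{chin1997comparative} to this effective recursion. The essential structural difference from the 1RDSA case is that the ``effective Jacobian'' at $x^*$ is now $I$, not $\nabla^2 f(x^*)/a_0$. In a Fabian-type result, the limiting mean and covariance are obtained from the Lyapunov equation driven by this Jacobian; with effective Jacobian $I$, the eigenvalue factors $(2\lambda_i - \beta^+)^{-1}$ appearing in $M$ of Theorem~\ref{thm:1rdsa-asymp-normal} collapse to a single scalar $(2a_0-\beta^+)^{-1}$, and the orthogonal conjugator $P$ becomes unnecessary. The pre-multiplication of both $b_n$ and $e_n$ by $\nabla^2 f(x^*)^{-1}$ then produces exactly the formulas in the statement: the mean picks up the factor $\nabla^2 f(x^*)^{-1}T$, and the covariance picks up a symmetric conjugation by $\nabla^2 f(x^*)^{-1}$, giving $(\nabla^2 f(x^*)^{-1})^2$. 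The case split $\mu = 0$ versus $\mu \neq 0$ follows the usual boundary analysis $\gamma > \alpha/6$ versus $\gamma = \alpha/6$, identical to the 1RDSA argument.

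The main obstacle is justifying rigorously that the random pre-multiplier $\Upsilon(\overline H_n)^{-1}$ can be replaced by its almost-sure limit $\nabla^2 f(x^*)^{-1}$ without perturbing the limiting distribution. Concretely, one needs a rate of convergence for $\overline H_n \to \nabla^2 f(x^*)$ that is $o(n^{-\beta/2})$. The averaging structure \eqref{eq:2rdsa-H} combined with Lemma~\ref{lemma:2rdsa-bias} gives a decomposition into a martingale-difference part (with $O(1/\sqrt{n})$ rate by (C9)--(C10) and a standard martingale CLT) plus a bias part of order $O(\delta_n^2)$; both are $o(n^{-\beta/2})$ since $\beta \le 2/3 < 1$ and $2\gamma \ge \beta$. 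With this rate control, the extra stochastic error $a_n[\Upsilon(\overline H_n)^{-1} - \nabla^2 f(x^*)^{-1}](b_n + e_n)$ is negligible after scaling by $n^{\beta/2}$ and summing, so it does not contribute to the limit. Once this replacement is justified, the remaining verification of the moment and noise-convergence hypotheses of Proposition~1 of \cite{chin1997comparative} is routine given (C1)--(C11).
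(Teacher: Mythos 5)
Your overall strategy---reduce the adaptive recursion to a Fabian-type normality statement after identifying the bias $b_n$, the martingale noise $e_n$, and the limiting behaviour of the Hessian pre-multiplier---is the same strategy the paper uses, and your identification of the limiting mean (via $\nabla^2 f(x^*)^{-1}T$ and $k_\mu$) and covariance (conjugation by $\nabla^2 f(x^*)^{-1}$ with the scalar factor $(2a_0-\beta^+)^{-1}$ replacing the eigenvalue-dependent factors) is correct. The genuine gap is in your ``main obstacle'' step, where you propose to replace $\Upsilon(\overline H_n)^{-1}$ by $\nabla^2 f(x^*)^{-1}$ and control the error by a rate $\|\overline H_n - \nabla^2 f(x^*)\| = o(n^{-\beta/2})$. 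That rate is not available. The martingale part of the Ces\`{a}ro average $\frac{1}{n+1}\sum_{m=0}^n\bigl(\widehat H_m - \E[\widehat H_m\mid x_m]\bigr)$ is not $O(n^{-1/2})$: each $\widehat H_m$ carries a factor $\delta_m^{-2}$, so $\E\|\widehat H_m - \E[\widehat H_m\mid x_m]\|^2 = O(\delta_m^{-4}) = O(m^{4\gamma})$, and the averaged martingale has second moment of order $n^{4\gamma-1}$, i.e. an $L^2$ rate of $n^{2\gamma-1/2}$. At the optimal setting $\alpha=1$, $\gamma=1/6$, $\beta=2/3$, this is $n^{-1/6}$, which is not $o(n^{-1/3})=o(n^{-\beta/2})$. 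Moreover, the ``bias'' part of $\overline H_n - \nabla^2 f(x^*)$ is not only the $O(\delta_m^2)$ estimator bias but also the drift $\nabla^2 f(x_m) - \nabla^2 f(x^*)$, whose Ces\`{a}ro average is at best $O(n^{-\beta/2})$, again not $o(n^{-\beta/2})$. So the replacement cannot be justified by the argument you give.

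The fix---and what the paper actually does---is to avoid the replacement entirely. Write the error recursion in the form
\begin{align*}
 x_{n+1} - x^* = (I- n^{-\alpha} \Gamma_n)(x_n - x^*) + n^{-(\alpha+\beta)/2} \Phi_n V_n + n^{(\alpha -\beta)/2}\Upsilon(\overline H_n)^{-1}T_n,
\end{align*}
with $\Gamma_n = a_0 \Upsilon(\overline H_n)^{-1}\nabla^2 f(\bar x_n)$, $\Phi_n = -a_0\Upsilon(\overline H_n)^{-1}$, $V_n = n^{-\gamma}\bigl(\widehat\nabla f(x_n) - \E[\widehat\nabla f(x_n)\mid x_n]\bigr)$ and $T_n = -a_0 n^{\beta/2}\beta_n$, and invoke Theorem 2.2 of \cite{fabian1968asymptotic} directly. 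That theorem is stated precisely for recursions whose coefficient matrices $\Gamma_n$ and $\Phi_n$ are random and converge only almost surely (here $\Gamma_n\to a_0 I$ and $\Phi_n\to -a_0\nabla^2 f(x^*)^{-1}$, by Theorems \ref{thm:2rdsa-x} and \ref{thm:2rdsa-H} together with (C12)); no rate of convergence of $\overline H_n$ is required, and conditions (2.2.1)--(2.2.3) are then verified exactly as you verify the remaining hypotheses. With that substitution your argument becomes essentially the paper's proof.
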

\begin{proof}
As in the case of 2SPSA of \cite{spall_adaptive}, we verify conditions (2.2.1)-(2.2.3) of \cite{fabian1968asymptotic} to establish the result and the reader is referred to Appendix \ref{sec:appendix-2rdsa} for  details.  
\end{proof}

\subsection{(Asymptotic) convergence rates}
\label{sec:amse}
Recall from Theorems \ref{thm:1rdsa-asymp-normal} and \ref{thm:2rdsa-asymp-normal} that we set $a_n=a_0/n^\alpha$ and $\delta_n = \delta_0/n^\gamma$, where $a_0,\delta_0 >0$,  $\alpha \in (0,1]$ and $\gamma \ge 1/6$.
Let $\mathcal{AMSE}_{2\mathcal{RDSA-}\textit{Unif}}(a_0,\delta_0)$ and $\mathcal{AMSE}_{2\mathcal{RDSA-}\textit{AsymBer}}(a_0, \delta_0)$ denote the AMSE for the uniform and asymmetric Bernoulli variants of 2RDSA, respectively.
These quantities can be derived using Theorems \ref{thm:1rdsa-asymp-normal} and \ref{thm:2rdsa-asymp-normal} as follows:
\begin{align*}
 \mathcal{AMSE}_{2\mathcal{RDSA-}\textit{Unif}}(a_0,\delta_0) =& \left(\frac{3.6 \delta_0^2 a_0}{2a-\beta} \l (\nabla^2 f(x^*)^{-1} T \r\right)^2 \\
 &+ \frac{a^2}{\delta_0^{2}(2a-\beta)} \text{trace}\left( \nabla^2 f(x^*)^{-1} S \nabla^2 f(x^*)^{-1}\right),\\
  \mathcal{AMSE}_{2\mathcal{RDSA-}\textit{AsymBer}}(a_0,\delta_0) =& \left(\frac{2\beta \delta_0^2 a_0}{(1+\epsilon)^2(2a-\beta)} \l (\nabla^2 f(x^*)^{-1} T \r\right)^2 \\&+ \frac{a^2}{\delta_0^{2}(2a-\beta)} \text{trace}\left( \nabla^2 f(x^*)^{-1} S \nabla^2 f(x^*)^{-1}\right),
\end{align*}
where $T$ is as defined in Theorem \ref{thm:1rdsa-asymp-normal} and $S=\frac{\sigma^2}{4} I$.

Recall from the discussion in Section \ref{sec:amse-1rdsa} that 1RDSA has a problem dependence on the minimum eigenvalue $\lambda_0$ of $\nabla^2 f(x^*)$ for the step-size constant $a_0$. One can get rid of this dependence by using one of the 2RDSA variants and $a_0=1$.
An alternative is to use iterate averaging, whose AMSE can be shown to be: 
\begin{align*}
 \mathcal{AMSE}_{1\mathcal{RDSA-}\textit{Avg}}(\delta_0) =& \left(\frac{3.6 \delta_0^2}{2-\beta} \l (\nabla^2 f(x^*)^{-1} T \r  \right)^2 + \frac{1}{\delta_0^{2}(2-\beta)} \text{trace}\left( \nabla^2 f(x^*)^{-1} S \nabla^2 f(x^*)^{-1}\right).
\end{align*}
Notice that with either variant of 2RDSA one obtains the same rate as  with iterate averaging and both these schemes do not have  dependence on $\lambda_0$. 
Moreover, using arguments similar to \cite{dippon1997weighted} (see expressions (5.2) and (5.3) there), we obtain  
\begin{align*}
\forall \delta_0, \mathcal{AMSE}_{2\mathcal{RDSA-}\textit{Unif}}(1,\delta_0) < 2 \min_{a_0>\beta/(2\lambda_0)} \mathcal{AMSE}_{1\mathcal{RDSA}}(a_0,\delta_0),\\
\forall \delta_0, \mathcal{AMSE}_{2\mathcal{RDSA-}\textit{AsymBer}}(1,\delta_0) < 2 \min_{a_0>\beta/(2\lambda_0)} \mathcal{AMSE}_{1\mathcal{RDSA}}(a_0,\delta_0). 
\end{align*}
Note that the above bound holds for any choice of $\delta_0$. Thus, 2RDSA is a robust scheme, as a wrong choice for $a_0$ would adversely affect the bound for 1RDSA, while 2RDSA has no such dependence on $a_0$.

\begin{remark} 
(\textbf{Iterate averaging}) Only from an ``asymptotic'' convergence rate viewpoint is it optimal to use larger step-sizes and iterate averaging. Finite-sample analysis (Theorem 2.4 in \cite{fathi2013transport}) shows that the initial error (which depends on the starting point of the algorithm) is not forgotten sub-exponentially fast, but at the rate $1/n$, where $n$ is the number of iterations. Thus, the effect of averaging kicks in only after enough iterations have passed and the bulk of the iterates are centered around the optimum. See Section 4.5 in \cite{spall2005introduction} for a detailed discussion on this topic.
\end{remark}

\textbf{Comparing 2RDSA-Unif vs 2SPSA.}\\
Taking the ratio of AMSE of 2RDSA with uniform perturbations to that of 2SPSA, we obtain:
\begin{align}
 \dfrac{\mathcal{AMSE}_{2\mathcal{RDSA-}\textit{Unif}}(1,\delta_0)}{\mathcal{AMSE}_{2\mathcal{SPSA}}(1,\delta_0)} =& \dfrac{3.24 (A) + (B)}{ (A) + (B)}, \text{ where}
\end{align}
\begin{align}
(A)=&\left(\frac{2 \delta_0^2}{2-\beta} \l (\nabla^2 f(x^*)^{-1} T \r  \right)^2, \label{eq:A}\\
(B)=&\frac{1}{\delta_0^{2}(2-\beta)} \text{trace}\left( \nabla^2 f(x^*)^{-1} S \nabla^2 f(x^*)^{-1}\right).\label{eq:B}
\end{align}

However, 2SPSA uses four system simulations per iteration, while 2RDSA-Unif uses only three. So, in order to achieve a given accuracy, the ratio of the number of simulations needed for 2RDSA-Unif (denoted by $\hat n_{\text{2RDSA-Unif}}$) to that for 2SPSA (denoted by $\hat n_{\text{2SPSA}}$) is
\begin{align*}
 \dfrac{\hat n_{2\mathcal{RDSA-}\textit{Unif}}}{\hat n_{2\mathcal{SPSA}}} =&  \dfrac{3}{4} \times\dfrac{\mathcal{AMSE}_{2\mathcal{RDSA-}\textit{Unif}}(1,\delta_0)}{\mathcal{AMSE}_{2\mathcal{SPSA}}(1,\delta_0)}
=  \dfrac{3}{4}\times \dfrac{3.24 (A) + (B)}{ (A) + (B)}
= 1 + \dfrac{5.72 (A) - (B)}{ 4(A) + 4(B)}.
\end{align*}
Thus, if $5.72 (A) - (B) < 0$, 2RDSA-Unif's AMSE is better than 2SPSA. On the other hand, if $5.72 (A) - (B) \approx 0$, 2RDSA-Unif is comparable to 2SPSA and finally, in the case where $5.72 (A) - (B) >0$, 2SPSA is better, but the difference may be minor unless $5.72 (A) >> (B)$, as we have the term  $4(A) + 4(B)$ in the denominator above. Note that the quantities $(A)$ and $(B)$ are problem-dependent, as they require knowledge of $\nabla^2 f(x^*)$ and $T$. 

Unlike the first-order algorithms, one cannot conclude that 2SPSA is better than 2RDSA-Unif even when $\nabla^2 f(x^*)$ and $T$ are known.
2RDSA-Unif uses fewer simulations per iteration, which may tilt the balance in favor of 2RDSA-Unif. 
We next show that asymmetric Bernoulli distributions are a better alternative, as 
they result in an AMSE for 2RDSA schemes that is lower than that for 2SPSA on \textit{all} problem instances.

\textbf{Comparing AMSE of 2RDSA-AsymBer to that of 2SPSA.}\\
Taking the ratio of AMSE of 2RDSA with asymmetric Bernoulli perturbations to that of 2SPSA, we obtain:
\begin{align*}
 \dfrac{\mathcal{AMSE}_{2\mathcal{RDSA-}\textit{AsymBer}}(1,\delta_0)}{\mathcal{AMSE}_{2\mathcal{SPSA}}(1,\delta_0)} =& \dfrac{\frac{\beta^2}{(1+\epsilon)^2} (A) + (B)}{ (A) + (B)}, 
\end{align*}
where $(A)$ and $(B)$ are as defined in \eqref{eq:A}--\eqref{eq:B}.
However, 2SPSA uses four system simulations per iteration, while 2RDSA-AsymBer uses only three. So, in order to achieve a given accuracy, the ratio of the number of simulations needed for 2RDSA-AsymBer (denoted by $\hat n_{\text{2RDSA-AsymBer}}$) to that for 2SPSA (denoted by $\hat n_{\text{2SPSA}}$) is
\begin{align*}
 \dfrac{\hat n_{2\mathcal{RDSA-}\textit{AsymBer}}}{\hat n_{2\mathcal{SPSA}}} =&  \dfrac{3}{4} \times\dfrac{\mathcal{AMSE}_{2\mathcal{RDSA-}\textit{AsymBer}}(1,\delta_0)}{\mathcal{AMSE}_{2\mathcal{SPSA}}(1,\delta_0)}
=  \dfrac{3}{4}\times \dfrac{\frac{\beta^2}{(1+\epsilon)^2}  (A) + (B)}{ (A) + (B)}
\end{align*}
For the sake of illustration, we set $\epsilon=0.01$ in the asymmetric Bernoulli distribution \eqref{eq:det-proj} to obtain
\begin{align*}
  \dfrac{\hat n_{2\mathcal{RDSA-}\textit{AsymBer}}}{\hat n_{2\mathcal{SPSA}}}   =  \dfrac{3.0000057  (A) + 3 (B)}{ 4(A) + 4(B)} < 1.
\end{align*}
Thus, 2RDSA with asymmetric Bernoulli distribution AMSE is clearly better than 2SPSA on \textit{all problem instances}, as $(A)$ and $(B)$ are positive (albeit unknown) quantities. 


\section{Numerical Experiments}
\label{sec:experiments}
\subsection{Setting}
We use two functions, both in $N=10$ dimensions for evaluating our algorithms.
 \begin{description}
 \item[Quadratic function:] 
Let $A$ be such that $NA$ is an upper triangular matrix with each entry one and $b$ is the $N$-dimensional vector of ones. Then, the quadratic objective function is defined as follows: 
\begin{align}
f(x) = x\tr A x + b\tr x,\label{eq:quadratic}
\end{align} 
The optimum $x^*$ for $f$ is such that each coordinate of $x^*$ is $-0.9091$, with $f(x^*) = -4.55$.  
 \item[Fourth-order function:] This is the function used for evaluating the second-order SPSA algorithm in \cite{spall_adaptive} and is given as follows:
\begin{align} 
 f(x) = x\tr A\tr A x + 0.1 \sum_{j=1}^N (Ax)^3_j + 0.01 \sum_{j=1}^N (Ax)^4_j,\label{eq:4thorder}
 \end{align} 
 where $A$ is the same as that in the case of quadratic loss.  The optimum $x^*=0$ with $f(x^*)=0$.
 \end{description}
For any $x$, the noise is $[x\tr, 1]z$, where $z$ is distributed as a multivariate Gaussian distribution in $11$ dimensions with mean $0$ and covariance  $\sigma^2 I_{11\times11}$. 
As remarked in \cite{spall_adaptive}, the motivation for this noise structure is to have most of the noise components depend on the iterate $x_n$ and also a component $z$ that ensures that the variance is at least $\sigma^2$.  

\subsection{Implementation}
We implement the following algorithms\footnote{The implementation is available at \url{https://github.com/prashla/RDSA/archive/master.zip}.}:
\begin{description}
\item[First-order:] This class includes the RDSA schemes with uniform and asymmetric Bernoulli distributions - 1RDSA-Unif and 1RDSA-AsymBer, respectively, and regular SPSA with Bernoulli perturbations - 1SPSA.
\item[Second-order:] This class includes 2RDSA-Unif and 2RDSA-AsymBer - the second-order RDSA schemes with uniform and asymmetric Bernoulli distributions, respectively, and also regular second-order SPSA with Bernoulli perturbations - 2SPSA.
\end{description}

For 1RDSA and 1SPSA, we set $\delta_n = 1.9/n^{0.101}$ and $a_n = 1/(n+50)$. For 2RDSA and 2SPSA, we set $\delta_n = 3.8/n^{0.101}$ and $a_n = 1/n^{0.6}$. These choices are motivated by standard guidelines - see \cite{spall2005introduction}. For uniform perturbation variants, we set the distribution parameter $\eta=1$ and for the asymmetric Bernoulli variants, we set the distribution parameter $\epsilon$ as follows: $\epsilon=0.0001$ for 1RDSA-AsymBer and $\epsilon=1$ for 2RDSA-AsymBer. These choices are motivated by a sensitivity study with different choices for $\epsilon$ - see Tables \ref{tab:1rdsa-epsilon}--\ref{tab:2rdsa-epsilon} in Appendix \ref{sec:appendix-results}.
For all the algorithms, the initial point $x_0$ is the $N=10$-dimensional vector of ones. To keep the iterates stable, each coordinate of the parameter $\theta$ is projected onto the set $[-2.048, 2.047]$.  
All results are averages over $1000$ replications.

We use normalized mean square error (NMSE) as the performance metric for comparing algorithms. This quantity is defined as follows:
$$ \text{NMSE} = \dfrac{\l x_{n_\text{end}} - x^* \r^2}{\l x_0 - x^*\r^2},$$ 
where $x_{n_\text{end}}$ is the algorithm iterate at the end of the simulation. Note, $n_\text{end}$ is algorithm-specific and a function of the number of measurements. For instance, with $2000$ measurements, $n=1000$ for both 1SPSA and both 1RDSA variants, as they use two measurements per iteration. On the other hand, for 2SPSA and both 2RDSA variants, an initial $20\%$ of the measurements were used up by 1SPSA/1RDSA and the resulting iterates were used to initialize the corresponding second-order method. Thus, with $2000$ measurements available, the initial $400$ measurements are used for 1SPSA/1RDSA and the remaining $1600$ are used up by 2SPSA/2RDSA. 
This results in $n_\text{end}$ of $1600/4=400$ for 2SPSA and $1600/3\approx 533$ for 2RDSA algorithms. Note 
that the difference here is due to the fact that 2RDSA uses $3$ simulations per iteration, while 2SPSA needs $4$. 

\begin{table}
\centering
 \caption{NMSE for quadratic objective \eqref{eq:quadratic} and noise parameter $\sigma=0.001$:\\ standard error from $1000$ replications shown after $\pm$}
\label{tab:mse-1}
\small
\begin{tabular}{|c|c|c|c|}
\toprule
\multicolumn{4}{|c|}{\multirow{2}{*}{\textbf{First-order Algorithms}}}\\[1.7em]
\midrule
 \textbf{No. of function} & \multirow{2}{*}{\textbf{1SPSA}} & \multirow{2}{*}{\textbf{1RDSA-Unif}} & \multirow{2}{*}{\textbf{1RDSA-AsymBer}} \\
 \textbf{measurements} & & & \\
 \midrule
 $1000$ & $\bm{4.15 \times 10^{-2} \pm 5.15\times 10^{-4}}$ & $4.53\times 10^{-2} \pm 5.72\times 10^{-4}$ & $4.18\times 10^{-2} \pm 5.41\times 10^{-4}$\\
&&&\\
 $2000$ & $3.42\times 10^{-2} \pm 4.68\times 10^{-4}$ & $3.67\times 10^{-2} \pm 5.28\times 10^{-4}$ & $\bm{3.38\times 10^{-2} \pm 4.84\times 10^{-4}}$\\
 \bottomrule
\multicolumn{4}{|c|}{\multirow{2}{*}{\textbf{Second-order Algorithms}}}\\[1.7em]
\midrule
  \textbf{No. of function} & \multirow{2}{*}{\textbf{2SPSA}} & \multirow{2}{*}{\textbf{2RDSA-Unif}} & \multirow{2}{*}{\textbf{2RDSA-AsymBer}} \\
 \textbf{measurements} & & & \\
 \midrule
 $1000$ & $1.05\times 10^{-3} \pm 2.25\times 10^{-5}$  & $9.61\times 10^{-5} \pm 2.48\times 10^{-6}$ & $\bm{8.39\times 10^{-5} \pm 2.25 \times 10^{-6}}$ \\
&&&\\
 $2000$ & $3.60\times 10^{-6} \pm 7.62\times 10^{-8}$  & $4.48\times 10^{-6} \pm 6.61\times 10^{-8}$ & $\bm{2.24\times 10^{-6} \pm 3.35 \times 10^{-8}}$\\
 \bottomrule 
\end{tabular}
\end{table}

\begin{table}
\centering
\small
 \caption{NMSE for quadratic objective \eqref{eq:quadratic} and noise parameter $\sigma=0$:\\ standard error from $1000$ replications shown after $\pm$}
\label{tab:mse-01}
\begin{tabular}{|c|c|c|c|}
\toprule
\multicolumn{4}{|c|}{\multirow{2}{*}{\textbf{First-order Algorithms}}}\\[1.7em]
\midrule
 \textbf{No. of function} & \multirow{2}{*}{\textbf{1SPSA}} & \multirow{2}{*}{\textbf{1RDSA-Unif}} & \multirow{2}{*}{\textbf{1RDSA-AsymBer}} \\
 \textbf{measurements} & & & \\
 \midrule
 $1000$ & $\bm{4.15 \times 10^{-2} \pm 5.15\times 10^{-4}}$ & $4.53\times 10^{-2} \pm 5.72\times 10^{-4}$ & $4.18\times 10^{-2} \pm 5.41\times 10^{-4}$\\
&&&\\
 $2000$ & $3.42\times 10^{-2} \pm 4.68\times 10^{-4}$ & $3.67\times 10^{-2} \pm 5.28\times 10^{-4}$ & $\bm{3.37\times 10^{-2} \pm 4.87\times 10^{-4}}$\\
 \bottomrule
\multicolumn{4}{|c|}{\multirow{2}{*}{\textbf{Second-order Algorithms}}}\\[1.7em]
\midrule
  \textbf{No. of function} & \multirow{2}{*}{\textbf{2SPSA}} & \multirow{2}{*}{\textbf{2RDSA-Unif}} & \multirow{2}{*}{\textbf{2RDSA-AsymBer}} \\
 \textbf{measurements} & & & \\
 \midrule
 $1000$ & $7.57\times 10^{-4} \pm 1.59\times 10^{-5}$  & $9.34\times 10^{-5} \pm 2.49\times 10^{-6}$ & $\bm{8.27\times 10^{-5} \pm 2.25\times 10^{-6}}$ \\
&&&\\
 $2000$ & $6.77\times 10^{-7} \pm 2.78\times 10^{-8}$  & $2.42\times 10^{-9} \pm 1.11\times 10^{-10}$ & $\bm{2.90\times 10^{-9} \pm 1.41\times 10^{-10}}$\\
 \bottomrule 
\end{tabular}
\end{table}

\subsection{Results: Quadratic objective}
Tables \ref{tab:mse-1}--\ref{tab:mse-01} present the normalized mean square error (NMSE) for both first and second-order algorithms with quadratic objective \eqref{eq:quadratic} and noise parameter $\sigma$ set to $0.001$ and $0$. 

\textit{\textbf{Observation 1:} Among first-order schemes, 1RDSA-AsymBer performs on par with 1SPSA, while 1RDSA-Unif is sub-par.}

The NMSE  of 1RDSA-AsymBer is comparable to that of 1SPSA, while 1RDSA-Unif results in a higher NMSE. This is consistent with the asymptotic rate results discussed earlier in Section \ref{sec:amse-1rdsa}.

\textit{\textbf{Observation 2:} Second-order schemes outperform their first-order counterparts, and  2RDSA-AsymBer performs best in this class.}

The first part of the observation is consistent with earlier results for the low noise regime (i.e., $\sigma=0.01$), for instance, see \cite{spall_adaptive}. Further, the gains of using second-order schemes are more noticeable in the zero-noise regime (see Table \ref{tab:mse-01}). Moreover, 2RDSA-AsymBer results in the best NMSE. In fact, running 2RDSA-AsymBer for $400$ iterations, which was the number used for 2SPSA with $2000$ measurements available, the resulting NMSE values was found to be $2.34\times 10^{-6}$, which is  better than the corresponding $400$-iteration result of $3.60\times10^{-6}$ for 2SPSA  (see Table \ref{tab:mse-1}) , while using only $75\%$ as many simulations. 

\subsection{Results: Fourth-order objective}
Table \ref{tab:mse-4th} presents results similar to those in Table \ref{tab:mse-1} for the fourth-order objective function \eqref{eq:4thorder} with the noise parameter $\sigma$ set to $0.001$. In addition, we also present the normalized function values in Table \ref{tab:mse-4thf}. The normalized function value is defined as the ratio $f(x_{n_\text{end}})/f(x_0)$. Considering that the fourth-order objective is more difficult to optimize in comparison to the quadratic one, we run all algorithms with a simulation budget of $10000$ function evaluations. From the results in Tables \ref{tab:mse-4thf}--\ref{tab:mse-4th}, one can draw conclusions similar to that in observations 1 and 2 above, except that 2RDSA-Unif shows the best performance among second-order schemes.  

\begin{table}
\centering
\small
 \caption{NMSE for fourth-order objective \eqref{eq:4thorder} and noise parameter $\sigma=0.001$:\\ standard error from $1000$ replications shown after $\pm$}
\label{tab:mse-4th}
\begin{tabular}{|c|c|c|c|}
\toprule
\multicolumn{4}{|c|}{\multirow{2}{*}{\textbf{First-order Algorithms}}}\\[1.7em]
\midrule
 \textbf{No. of function} & \multirow{2}{*}{\textbf{1SPSA}} & \multirow{2}{*}{\textbf{1RDSA-Unif}} & \multirow{2}{*}{\textbf{1RDSA-AsymBer}} \\
 \textbf{measurements} & & & \\
 \midrule
 $2000$ & $1.37\times 10^{-1} \pm 1.39 \times 10^{-3}$ & $1.38\times 10^{-1} \pm 1.33 \times 10^{-3}$ & $\bm{1.35 \times 10^{-1} \pm 1.36 \times 10^{-3}}$\\
&&&\\
 $10000$ & $\bm{1.14\times 10^{-1} \pm 1.14\times 10^{-3}}$ & $1.18\times 10^{-1} \pm 1,23\times 10^{-3}$ & $\bm{1.14\times 10^{-1} \pm 1.23\times 10^{-3}}$\\
 \bottomrule
\multicolumn{4}{|c|}{\multirow{2}{*}{\textbf{Second-order Algorithms}}}\\[1.7em]
\midrule
  \textbf{No. of function} & \multirow{2}{*}{\textbf{2SPSA}} & \multirow{2}{*}{\textbf{2RDSA-Unif}} & \multirow{2}{*}{\textbf{2RDSA-AsymBer}} \\
 \textbf{measurements} & & & \\
 \midrule
 $2000$ & $3.2\times 10^{-2} \pm 5.38\times 10^{-4}$  & $\bm{1.48\times 10^{-2} \pm 2.64\times 10^{-4}}$ & $4.89\times 10^{-2} \pm 9.01\times 10^{-4}$ \\
&&&\\
 $10000$ & $1.01 \times 10^{-2}  \pm  1.96 \times 10^{-4}$  & $\bm{1.74\times 10^{-3} \pm 3.65\times 10^{-5}}$ & $6.45\times 10^{-2} \pm 1.48 \times 10^{-3}$\\
 \bottomrule 
\end{tabular}
\end{table}

\begin{table}
\centering
\small
 \caption{Normalized function values for fourth-order objective \eqref{eq:4thorder} and noise parameter $\sigma=0.001$: standard error from $1000$ replications shown after $\pm$}
\label{tab:mse-4thf}
\begin{tabular}{|c|c|c|c|}
\toprule
\multicolumn{4}{|c|}{\multirow{2}{*}{\textbf{First-order Algorithms}}}\\[1.7em]
\midrule
 \textbf{No. of function} & \multirow{2}{*}{\textbf{1SPSA}} & \multirow{2}{*}{\textbf{1RDSA-Unif}} & \multirow{2}{*}{\textbf{1RDSA-AsymBer}} \\
 \textbf{measurements} & & & \\
 \midrule
 $2000$ & $9.8\times 10^{-3} \pm 1.01\times 10^{-4}$ & $1.1\times 10^{-2} \pm 1,01\times 10^{-4}$ & $\bm{9.6\times 10^{-3} \pm 1.01\times 10^{-3}}$\\
&&&\\
 $10000$ & $\bm{6.1\times 10^{-3} \pm 6.96\times 10^{-5}}$ & $6.3\times 10^{-3} \pm 7,27\times 10^{-5}$ & $\bm{6.1\times 10^{-3} \pm 7.27\times 10^{-5}}$\\
 \bottomrule
\multicolumn{4}{|c|}{\multirow{2}{*}{\textbf{Second-order Algorithms}}}\\[1.7em]
\midrule
  \textbf{No. of function} & \multirow{2}{*}{\textbf{2SPSA}} & \multirow{2}{*}{\textbf{2RDSA-Unif}} & \multirow{2}{*}{\textbf{2RDSA-AsymBer}} \\
 \textbf{measurements} & & & \\
 \midrule
 $2000$ & $2.55\times 10^{-3} \pm 3.35\times 10^{-5}$  & $\bm{2.17\times 10^{-4} \pm 1.66\times 10^{-5}}$ & $1.97\times 10^{-3} \pm 1.83 \times 10^{-4}$ \\
&&&\\
 $10000$ & $7.62 \times 10^{-4} \pm 1.1\times 10^{-5}$  & $\bm{4.41\times 10^{-5} \pm 4.42\times 10^{-6}}$ & $1.54\times 10^{-3} \pm 1.45 \times 10^{-4}$\\
 \bottomrule 
\end{tabular}
\end{table}

\begin{remark}\textbf{\textit{(Enhanced 2SPSA)}}
In \cite{spall-jacobian}, enhancements to the 2SPSA algorithm incorporated adaptive feedback and weighting to improve Hessian estimates. However, preliminary numerical experiments that we conducted for enhanced 2SPSA with the parameters  recommended in \cite{spall-jacobian} indicate that the benefits of such a scheme kick in only after a large number of iterations. For instance, for the fourth-order objective function \eqref{eq:4thorder}, running the enhanced 2SPSA algorithm resulted in a high NMSE and the latter became comparable to that of 2SPSA after increasing the simulation budget of $2000$.  

Finally, the numerical results presented in Tables \ref{tab:mse-1}--\ref{tab:mse-4thf} make a fair comparison in the sense that, except the perturbations every other parameter (e.g.,, step-sizes $a_n$, perturbation constants $\delta_n$, initial point $x_0$) is kept constant across algorithms in each class (first/second-order). The results demonstrate that it is indeed advantageous to use uniform/asymmetric Bernoulli perturbations. It would be interesting future work to enhance 2RDSA schemes to improve the Hessian estimates along the lines of \cite{spall-jacobian} and then numerically compare the performance of enhanced 2RDSA schemes with that of enhanced 2SPSA.
\end{remark}  
\section{Conclusions}
\label{sec:conclusions}
We considered a general problem of optimization under noisy
observations and presented the first adaptive random directions
Newton algorithm. Two sets of i.i.d. random perturbations were analyzed: symmetric uniformly distributed and asymmetric Bernoulli distributed. In addition, we also presented a simple gradient
search scheme using two sets of perturbations. While
our gradient search scheme requires the same number of perturbations
and system simulations per iteration as the simultaneous perturbation
gradient scheme of \cite{spall}, our Newton
scheme only requires half the number of perturbations and three-fourths
the number of simulations as compared to the simultaneous
perturbation Newton algorithm of \cite{spall_adaptive}.
We proved the convergence of our algorithms and analyzed their rates
of convergence using the asymptotic mean square error (AMSE). 
From this analysis, we concluded that the asymmetric Bernoulli perturbation variants exhibit the best AMSE for both first- and second-order RDSA schemes. 
Furthermore,
our numerical experiments show that our Newton algorithm requires only 75\% of the number of function evaluations
as required by the Newton algorithm of \cite{spall_adaptive} while providing the same accuracy levels as the latter algorithm.

As future work, we outline two possible directions.
First, it would be interesting to use the approach of \cite{spall-jacobian} to arrive at an adaptive scheme that incorporates a feedback term to improve the quality of the Hessian estimate. 
Second, it would be of interest to extend our algorithms to
scenarios where the noise random variables form a parameterized
Markov process and to develop multiscale algorithms in this
setting for long-run average or infinite horizon discounted costs.
Such algorithms will be of relevance in the context
of reinforcement learning, for instance, as actor-critic algorithms.
\section*{Appendix}
\appendix
\section{Proofs for 1RDSA}
\label{sec:appendix-1rdsa}
\subsection*{Proof of Theorem \ref{thm:1rdsa-strong-conv}}

\begin{proof}
We first rewrite the update rule \eqref{eq:1rdsa} as follows:
\begin{align}
x_{n+1} = x_n - a_n(\nabla f(x_n) + \eta_n + \beta_n),
\label{eq:1rdsa-equiv}
\end{align}
where $\eta_n = \widehat \nabla f(x_n) - \E(\widehat \nabla f(x_n) \mid \F_n)$ is a martingale difference error term and $\beta_n = \E(\widehat \nabla f(x_n) \mid \F_n) - \nabla f(x_n)$ is the bias in the gradient estimate.
Convergence of \eqref{eq:1rdsa-equiv} can be inferred from Theorem 2.3.1 on pp. 39 of \cite{kushcla}, provided we verify that the assumptions A2.2.1 to A2.2.3 and A2.2.4'' of \cite{kushcla} are satisfied. We mention these assumptions as (B1)-(B4) below.
\begin{enumerate}[label={\textbf(B\arabic*)}]
\item $\nabla f$ is a continuous $\R^N$-valued function.

\item  The sequence $\beta_n,n\geq 0$ is almost surely bounded with
$\beta_n \rightarrow 0$ almost surely as $n\rightarrow \infty$.

\item The step-sizes $a_n,n\geq 0$ satisfy
\[  a(n)\rightarrow 0 \mbox{ as }n\rightarrow\infty \text{ and } \sum_n a_n=\infty.\]

\item $\{\eta_n, n\ge 0\}$ is a sequence such that for any $\epsilon>0$,
\[ \lim_{n\rightarrow\infty} P\left( \sup_{m\geq n}  \left\|
\sum_{i=n}^{m} a_i \eta_i\right\| \geq \epsilon \right) = 0. \]
\end{enumerate} 
The above assumptions can be verified for \eqref{eq:1rdsa-equiv} as follows:
\begin{itemize}
\item (A1) implies (B1).
\item (A5) together with \eqref{eq:l2} in the proof of Lemma \ref{lemma:1rdsa-bias} imply that the bias $\beta_n$ is almost surely bounded. Further, Lemma \ref{lemma:1rdsa-bias} implies that $\beta_n$ is of the order $O(\delta_n^2)$ and since $\delta_n \rightarrow 0$ as $n\rightarrow \infty$ (see (A5)), we have that $\beta_n \rightarrow 0$. Thus, (B2) is satisfied.
\item (A5) implies (B3).
\item We now verify (B4) using arguments similar to those used in Chapter 7.3 of \cite{spall2005introduction}:
We first recall a martingale inequality attributed to Doob (also given as (2.1.7) on pp. 27 of \cite{kushcla}):
\begin{align}
P\left( \sup_{m\geq 0}   \left\|W_m\right\| \geq \epsilon \right) \le \dfrac{1}{\epsilon^2} \lim_{m\rightarrow \infty} \E \left\|W_m\right\|^2. 
\end{align}
We apply the above inequality in our setting to the martingale sequence $\{W_n\}$, where  $W_n := \sum_{i=0}^{n-1} a_i \eta_i$, $n\ge 1$, to obtain
\begin{align}
P\left( \sup_{m\geq n}   \left\|\sum_{i=n}^{m} a_i \eta_i\right\| \geq \epsilon \right) \le \dfrac{1}{\epsilon^2} \E \left\|
\sum_{i=n}^{\infty} a_i \eta_i\right\|^2 = \dfrac{1}{\epsilon^2} \sum_{i=n}^{\infty} a_i^2 \E\left\| \eta_i\right\|^2. \label{eq:b4}
\end{align}
The last equality above follows by observing that, for $m < n$, $\E(\eta_m \eta_n) = \E(\eta_m \E(\eta_n\mid \F_n))=0$.

Using the identity $\E\left\|X -  E[X\mid\F_n]\right\|^2 \le \E \left\|X\right\|^2$ for any random variable $X$, we bound $\E\left\| \eta_n\right\|^2$ as follows:
\begin{align}
\E\left\| \eta_n\right\|^2 \le& N \E \left(\eta^i_n\right)^2 \label{eq:mi} \text{ for some } i\in \{1,\ldots,N\}\\
= & \dfrac{N}{4\delta_n^2} \E\left(d_n^i(y_n^+ -y_n^-)\right)^2 \nonumber\\
\le & \dfrac{N}{4\delta_n^2} \left(\left(\E\left(d_n^i y_n^+ \right)^2\right)^{1/2}
+ \left(\E\left(d_n^i y_n^-\right)^2\right)^{1/2}\right)^2 \label{eq:minko}\\
\le &\frac{N}{4\delta_n^2} \left[ \E\left((d_n^i)^{2+2\rho_1}\right) \right]^{\frac{1}{1+\rho1}} 
\left(\left[\E\left[ (y_n^+)\right]^{2+2\rho_2}\right]^{\frac{1}{1+\rho_2}} +
\left[\E\left[ (y_n^-)\right]^{2+2\rho_2}\right]^{\frac{1}{1+\rho_2}}\right)\label{eq:holder}\\
\le & \frac{C}{\delta_n^2}, \text{ for some } C< \infty. \label{eq:h3}
\end{align}
The inequality in \eqref{eq:minko} follows by the fact that $\E (X+Y)^2 \le \left( (\E X^2)^{1/2} + (\E Y^2)^{1/2}\right)^2$.
The inequality in \eqref{eq:holder} uses Holder's inequality, with $\rho_1, \rho_2>0$ satisfying $\frac{1}{1+\rho_1} + \frac{1}{1+\rho_2}=1$. 
The inequality in \eqref{eq:h2} follows from (A3) and the fact that the perturbations $d_n$ have finite moments. 

Plugging \eqref{eq:h2} into \eqref{eq:b4}, we obtain
\begin{align*}
 \lim_{n\rightarrow\infty} P\left( \sup_{m\geq n}  \left\|
\sum_{i=n}^{m} a_i \eta_i\right\| \geq \epsilon \right) \le \dfrac{C}{\epsilon^2} \lim_{n\rightarrow\infty} \sum_{i=n}^{\infty}  \frac{a_i^2}{\delta_i^2} =0.
\end{align*}
The equality above follows from the the fact that $\sum_n \left(\frac{a_n}{\delta_n}\right)^2 <\infty$ (see (A5)). 
\end{itemize}
The claim follows from Theorem 2.3.1 on pp. 39 of \cite{kushcla}.
\end{proof}
%

\section{Proofs for 2RDSA}
\label{sec:appendix-2rdsa}
\subsection*{Proof of Theorem \ref{thm:2rdsa-H}}

\begin{proof}
The proof proceeds in exactly the same manner manner as the proof of Theorem 2a in \cite{spall_adaptive}. For the sake of completeness, we sketch below the main arguments involved in the proof. 

Let $W_m = \widehat H_m - \E\left[\left. \widehat H_m \right| x_m\right]$. Then, we know that $\E W_m =0$. In addition, we have $\sum_m \frac{\E \left\|W_m\right\|^2}{m^2} < \infty$. The latter follows by first observing that $\E\left[ \delta_m^2 \left\|\widehat H_m\right\|^2 \right] < \infty, \forall m$ uniformly as a consequence of (C9) and then coupling this fact with (C8).
Now, applying a martingale convergence result from p. 397 of \cite{lahaprobability} to $W_m$, we obtain
\begin{align}
 \dfrac{1}{n+1} \sum_{m=0}^n \left(\widehat H_m - \E\left[\left. \widehat H_m \right| x_m\right]\right) \rightarrow 0 \text{ a.s.}
\label{eq:h-cesaro}
\end{align}

From Proposition \ref{lemma:2rdsa-bias}, we know that $\E\left[\left. \widehat H_n \right| x_n \right] = \nabla^2 f(x_n) + O(\delta_n^2)$.
\begin{align*}
 &\dfrac{1}{n+1} \sum_{m=0}^n \E\left[\left. \widehat H_m \right| x_m\right]
=  \dfrac{1}{n+1} \sum_{m=0}^n \left(\nabla^2 f(x_m) + O(\delta_m^2)\right)
\rightarrow  \nabla^2 f(x^*) \text{ a.s.}
\end{align*}
The final step above follows from the fact that the Hessian is continuous near $x_n$ and Theorem \ref{thm:2rdsa-x} which implies $x_n$ converges almost surely to $x^*$.
Thus, we obtain
\begin{align*}
 \dfrac{1}{n+1} \sum_{m=0}^n \widehat H_m \rightarrow  \nabla^2 f(x^*) \text{ a.s.}
\end{align*}
and the claim follows by observing that $\overline H_m =   \dfrac{1}{n+1} \sum_{m=0}^n \widehat H_m$.
\end{proof}

\subsection*{Proof of Theorem \ref{thm:2rdsa-asymp-normal}}

\begin{proof}
We use the well-known result for establishing asymptotic normality of stochastic approximation schemes from  \cite{fabian1968asymptotic}. 
As in the case of SPSA-based algorithms (cf. \cite{spall}, \cite{spall_adaptive}), for 1RDSA, it can be shown that, for sufficiently large $n$, there exists a $\bar x_n$ on the line segment that connects $x_n$ and $x^*$, such that the following holds:
\begin{align*}
\E[\widehat \nabla f(x_n) \mid x_n] = \nabla^2 f(\bar x_n)(x_n - x^*) + \beta_n,
\end{align*}
where $\beta_n = \E(\widehat \nabla f(x_n) \mid x_n) - \nabla f(x_n)$ is the bias in the gradient estimate. 
Next, we write the estimation error $x_{n+1} - x^*$ in a form that is amenable for applying the result from \cite{fabian1968asymptotic}, as follows:
\begin{align}
 x_{n+1} - x^* = (I- n^{-\alpha} \Gamma_n)(x_n - x^*) + n^{-(\alpha+\beta)/2} \Phi_n V_n + n^{(\alpha -\beta)/2}\Upsilon(\overline H_n)^{-1}T_n,
\end{align}
where $\Gamma_n = a_0 \Gamma(\overline H_n)^{-1} \nabla^2 f(\bar x_n)$, $\Phi_n = -a_0 \Gamma(\overline H_n)^{-1}$, $V_n = n^{-\gamma}(\widehat \nabla f(x_n) - \E(\widehat\nabla f(x_n)\mid x_n))$ and $T_n=-a_0 n ^{\beta/2} \beta_n$. 
The above recursion is similar to that for 2SPSA of \cite{spall_adaptive}, except that we estimate the Hessian and gradients using RDSA and not SPSA. 

For establishing the main claim, one needs to verify conditions (2.2.1) to (2.2.3) in Theorem 2.2 of \cite{fabian1968asymptotic}. This can be done as follows:
\begin{itemize}
 \item From the results in Theorems \ref{thm:2rdsa-x} and \ref{thm:2rdsa-H}, we know that $x_n$ and $\nabla^2 f(x_n)$ converge to $x^*$ and $\nabla^2 f(x^*)$, respectively. Thus,
$\Gamma_n \rightarrow a_0$, $\Phi_n \rightarrow -a_0 \nabla^2 f(x^*)^{-1}$. Moreover, $T_n$ is identical to that in 1RDSA and hence, $T_n \rightarrow 0$ if $\gamma > \alpha/6$ and if $\gamma=\alpha/6$, then the limit of $T_n$ is the vector $T$ as defined in Theorem \ref{thm:1rdsa-asymp-normal}. These observations together imply that condition (2.2.1) of \cite{fabian1968asymptotic} is satisfied.   
\item $V_n$ is also identical to that in 1RDSA and hence, $E(V_n V_n\tr\mid x_n) \rightarrow \frac{1}{4} \delta_0^{-1} \sigma^2 I$. This implies condition (2.2.2) of \cite{fabian1968asymptotic} is satisfied.
\item Condition (2.2.3) can be verified  using arguments that are the same as those in \cite{spall} for first-order SPSA.  
\end{itemize}
Now, applying Theorem 2.2 of \cite{fabian1968asymptotic}, it is straightforward to obtain the expressions for the mean $\mu$ and covariance matrix $\Gamma$ of the limiting Gaussian distribution.
\end{proof}


\section{Additional Numerical Results}
\label{sec:appendix-results}
Tables \ref{tab:1rdsa-epsilon} and \ref{tab:2rdsa-epsilon} present the results from a sensitivity study conducted for the asymmetric Bernoulli variants of 1RDSA and 2RDSA, respectively. 

\begin{table}[h]
\caption{NMSE of asymmetric Bernoulli perturbations based RDSA schemes as a function of distribution parameter $\epsilon$, with $\sigma=0.001$ and using $2000$ function measurements:\\ standard error from $1000$ replications shown after $\pm$}
\label{tab:epsilon-results}
\centering
\begin{tabular}{cc}
\begin{subfigure}[b]{0.4\textwidth}
\centering
\scriptsize
    \begin{tabular}{|c|c|}    
    \toprule
    \multirow{2}{*}{$\bm{\epsilon}$ \textbf{value}} & \multirow{2}{*}{\textbf{NMSE}} \\[1.7em]     
    \midrule
$0.000001$  & $3.38 \times 10^{-2}  \pm 4.87 \times 10^{-4}$ \\
$0.00001$   & $3.38 \times 10^{-2}  \pm 4.87 \times 10^{-4}$ \\
$0.0001$    & $3.38 \times 10^{-2}  \pm 4.87 \times 10^{-4}$ \\
$0.001$     & $3.38 \times 10^{-2}  \pm 4.87 \times 10^{-4}$ \\
$0.01$      & $3.39 \times 10^{-2}  \pm 4.87 \times 10^{-4}$ \\
$0.1$       & $3.38 \times 10^{-2}  \pm 4.81 \times 10^{-4}$ \\
$0.2$       & $3.37 \times 10^{-2}  \pm 4.87 \times 10^{-4}$ \\
$0.5$       & $3.42 \times 10^{-2}  \pm 5.00 \times 10^{-4}$ \\
$\bm{1}$    & $3.54 \times 10^{-2}  \pm 5.09 \times 10^{-4}$ \\
$2$         & $3.87 \times 10^{-2}  \pm 5.76 \times 10^{-4}$ \\
$5$         & $5.21 \times 10^{-2}  \pm 8.10 \times 10^{-4}$ \\
    \bottomrule
    \end{tabular}%
\caption{1RDSA-AsymBer}
\label{tab:1rdsa-epsilon}
\end{subfigure}
&
\begin{subfigure}[b]{0.4\textwidth}
\centering
\scriptsize
    \begin{tabular}{|c|c|}
    \toprule
    \multirow{2}{*}{$\bm{\epsilon}$ \textbf{value}} & \multirow{2}{*}{\textbf{NMSE}} \\
     & \\
    \midrule    
    $0.000001$  & $7.21 \times 10^{-2} \pm 8.95 \times 10^{-4}$ \\
    $0.00001$   & $7.93 \times 10^{-2} \pm 1.46 \times 10^{-3}$ \\
    $0.0001$    & $6.24 \times 10^{-2} \pm 1.34 \times 10^{-3}$ \\
    $0.001$     & $8.39 \times 10^{-2} \pm 2.36 \times 10^{-3}$ \\
    $0.01$      & $8.32 \times 10^{-2} \pm 4.04 \times 10^{-2}$ \\
    $0.1$       & $2.35 \times 10^{-1} \pm 1.13 \times 10^{-2}$ \\
    $0.2$       & $1.02 \times 10^{-1} \pm 9.10 \times 10^{-3}$ \\
    $0.5$       & $1.45 \times 10^{-4} \pm 1.43 \times 10^{-4}$ \\
    $1$         & $2.24 \times  10^{-6} \pm 3.67 \times 10^{-8}$ \\
    $2$         & $2.24 \times 10^{-6} \pm 3.35 \times 10^{-8}$ \\
    $5$         & $2.85 \times 10^{-6} \pm 4.60 \times 10^{-8}$ \\
      \bottomrule
    \end{tabular}%
\caption{2RDSA-AsymBer}
\label{tab:2rdsa-epsilon}
\end{subfigure}
\end{tabular}
\end{table}

\bibliographystyle{plainnat}
\bibliography{references}
\end{document}